%

\input ./style/arxiv-general.cfg
\documentclass[aop,MSNbibl,dvips]{arximspdf}
\makeatletter
   \@ifpackageloaded{graphicx}{}{\usepackage{graphicx}}
\makeatother

%

\doi{10.1214/13-AOP904} 
\volume{43}
\issue{3}
\pubyear{2015}
\firstpage{1493}
\lastpage{1534}

\makeatletter
\newcommand{\rrvert}{\vert}
\newcommand{\llvert}{\vert}
\newcommand{\xrightarrow}[1]{\stackrel{#1}{\to}}
\newcommand{\xxrightarrow}[1]{\stackrel{#1}{\longrightarrow}}
\newtheorem{theorem}{Theorem}[section]
\newtheorem{lemma}[theorem]{Lemma}
\newproclaim{notation}[theorem]{Notation}
\newtheorem{proposition}[theorem]{Proposition}
\newproclaim{remark}[theorem]{Remark}
\makeatother

\begin{document}
\begin{frontmatter}

\title{On \textit{L}\tsup{2} modulus of continuity of Brownian local
times and Riesz potentials}
\runtitle{Modulus of continuity of Brownian local times}

\begin{aug}
\author[A]{\fnms{Aur\'elien} \snm{Deya}\corref{}\ead[label=e1]{aurelien.deya@univ-lorraine.fr}},
\author[B]{\fnms{David} \snm{Nualart}\thanksref{T2}\ead[label=e2]{nualart@math.ku.edu}}
\and
\author[A]{\fnms{Samy} \snm{Tindel}\thanksref{T3}\ead[label=e3]{samy.tindel@univ-lorraine.fr}}
\runauthor{A. Deya, D. Nualart and S. Tindel}
\affiliation{Universit\'e de Lorraine, University of Kansas and
Universit\'e de Lorraine}
\address[A]{A. Deya\\
S. Tindel\\
Institut {\'E}lie Cartan\\
Universit\'e de Lorraine\\
B.P. 239\\
54506 Vand{\oe}uvre-l{\`e}s-Nancy\\
France\\
\printead{e1}\\
\phantom{E-mail:\ }\printead*{e3}} 
\address[B]{D. Nualart\\
Department of Mathematics\\
University of Kansas\\
405 Snow Hall\\
Lawrence, Kansas\\
USA\\
\printead{e2}}
\end{aug}
\thankstext{T2}{Supported by NSF Grant DMS-12-08625.}
\thankstext{T3}{Member of the BIGS (Biology, Genetics and Statistics)
team at INRIA Nancy.}

\received{\smonth{4} \syear{2013}}
\revised{\smonth{11} \syear{2013}}

%
\begin{abstract}
This article is concerned with modulus of continuity of Brownian local
times. Specifically, we focus on three closely related problems: (a)
Limit theorem for a Brownian modulus of continuity involving Riesz
potentials, where the limit law is an intricate Gaussian mixture. (b)
Central limit theorems for the projections of $L^{2}$ modulus of
continuity for a one-dimensional Brownian motion. (c)~Extension of the
second result to a two-dimensional Brownian motion. Our proofs rely on
a combination of stochastic calculus and Malliavin calculus tools, plus
a thorough analysis of singular integrals.
\end{abstract}

%
\begin{keyword}[class=AMS]
\kwd{60G15}
\kwd{60H07}
\kwd{60H10}
\kwd{65C30}
\end{keyword}
\begin{keyword}
\kwd{Brownian motion}
\kwd{local time}
\kwd{Malliavin calculus}
\end{keyword}

\end{frontmatter}

\section{Introduction}\label{sec1}
Let $\{B_t, 0\leq t\leq1\}$ be a standard linear Brownian motion
defined on some complete probability space $(\Omega,\mathcal
{F},\mathbf
{P})$. In the sequel, we denote by $L_{t}(x)$ the local time of $B$ at
a given point $x\in\mathbb R$, defined for $t\in[0,1]$. A nice
combination of
stochastic calculus, stochastic analysis and evaluation of
singularities associated with heat kernels have recently led to a
number of interesting limit theorems for quantities related to the
family $\{L_{t}(x); t\in[0,1], x\in\mathbb R\}$. Let us quote,
for instance,
the use of Malliavin and stochastic calculus tools in order to get
suitably normalized limits for $L^{2}$ modulus of continuity (see \cite
{HN09,Ro11}) or third moment in space (cf. \cite{HN10}) of Brownian
local time. Malliavin calculus tools have also been essential in order
to generalize the notion of self-intersection local time \cite
{HN05,HNS08} and to obtain central limit theorems for additive
functionals~\cite{HNX11} of fractional Brownian motion.

The current article proposes to take another step into the
relationships between Brownian local time and stochastic analysis.
Specifically, we shall handle the following problems:
\begin{longlist}[(2)]
\item[(1)] One of the motivation alluded to in \cite{Ro11} for the
renormalization of $L^{2}$ modulus of continuity of local times comes
from the study of the Hamiltonian
%
\begin{eqnarray}\label{eqhamiltonian-mod-cty-loc-time}
H_t^{h}(B) &=& \int_{\mathbb R}
\bigl[L_{t}(x+h) - L_{t}(x) \bigr]^{2} \,dx
\nonumber\\[-8pt]\\[-8pt]
&=&\int
_{\mathbb R} \biggl[\int_{0}^{t}
\bigl(\delta_{x+h}(B_u) - \delta_{x}(B_u)
\bigr) \,du \biggr]^{2} \,dx,\nonumber
\end{eqnarray}
which is involved in the definition of some nonfolding polymers.
However, one might wish to consider a slightly weaker repelling
self-interaction of the polymer by introducing the following family of
Hamiltonians indexed by $\gamma\in(0,1)$:
%
\begin{equation}
\label{eqdef-hamiltonian-gamma} H_t^{h,\gamma}(B) = \int_{\mathbb R}
\biggl[\int_{0}^{t} \bigl(|B_v+x+h|^{-\gamma}
- |B_v+x|^{-\gamma} \bigr)\,dv \biggr]^{2} \,dx.
\end{equation}
For this modified Hamiltonian, we shall prove the following limiting theorem:

%
\begin{theorem}\label{thmlim-hamiltonian-gamma}
Consider $\gamma\in(3/4,1)$ and the family of Hamiltonians $\{
H_t^{h,\gamma
}(B); t\in[0,1]\}$ defined by~(\ref{eqdef-hamiltonian-gamma}). Then
one has, as $h$ tends to zero,
%
\begin{equation}
\label{eqlim-hamiltonian-gamma} \frac{H^{h,\gamma}(B) - \mathbf E [H^{h,\gamma}(B)
]}{c_\gamma
h^{7/2-2\gamma}} \xrightarrow{(d)} W_{\alpha}
\end{equation}
in the space $\mathcal{C}([0,1];\mathbb R)$ of real continuous
functions on
$[0,1]$. In relation~(\ref{eqlim-hamiltonian-gamma}), $c_\gamma$~stands
for a deterministic positive constant depending only on $\gamma$, $W$
is a
standard Brownian motion independent of $B$ and $\alpha$ is the
self-intersection local time of $B$, that is (formally),
%
\begin{equation}
\label{eqdef-alpha} \alpha_t:=\int_0^t
\,dv \int_0^v \,du \delta_0(B_v-B_u),
\end{equation}
where $\delta_0$ is the Dirac delta function concentrated at $0$.
\end{theorem}

Theorem~\ref{thmlim-hamiltonian-gamma} turns out to be interesting for
several reasons:
\begin{itemize}
\item The Hamiltonian $H^{h,\gamma}(B)$ quantifies a weak self-interaction of
the Brownian path, detecting if the path self intersects (products of
the form $|B_{v_{1}}+x|^{-\gamma}|B_{v_{2}}+x|^{-\gamma}$) or has a
fold with amplitude $h$ (products of the form $|B_{v_{1}}+x+h|^{-\gamma
}|B_{v_{2}}+x|^{-\gamma}$). It can thus be related to the polymer model
studied in \cite{HKK}, where a discrete time random walk $S_n$ on
$\mathbb{Z}$ is weighted according to the following Hamiltonian:
\[
H_n= \sum_{i,j=1}^n
\mathbf{1}_{\{S_i=S_j\}}-\sum_{i,j=1}^n
\mathbf {1}_{\{|S_i-S_j|=1\}}. %
\]
This relation was also the motivation behind the central limit theorem
given in~\cite{Ro11}, and other physically relevant models for
self-interacting continuous paths include Brownian filaments (see \cite
{FG02} for a detailed definition of these objects), motivated by
turbulent fluids. We thus hope that the scaling limit for our quantity
$H^{h,\gamma}(B)$ can shed some light on the aforementioned models.

\item
Theorem~\ref{thmlim-hamiltonian-gamma} also exhibits an interesting
phenomenon in terms of limiting behavior. Indeed, the reader can easily
observe that the limiting process in the right-hand side of (\ref
{eqlim-hamiltonian-gamma}) does not depend on the parameter $\gamma$ in
$(3/4,1)$, the only difference lying in the normalizing quantity
$c_\gamma
h^{7/2-2\gamma}$. Furthermore, it was shown in \cite{HN09,Ro11} that
relation~(\ref{eqlim-hamiltonian-gamma}) still holds true in the
limiting case $\gamma=1$. This means that the process $W_{\alpha}$, which
can be seen as a Gaussian mixture, might also be considered as a rather
canonical object.

\item
At a methodological level, our proof of Theorem~\ref{thmlim-hamiltonian-gamma} is another example of the interest of
stochastic calculus techniques with respect to the method of moments in
this context. We should compare our methodology, for example, to the
computationally demanding paper \cite{CLMR}. The advantage of
stochastic calculus methods had already been highlighted in \cite
{HN09,Ro11}, but our proof combines this approach with an extensive use
of Fourier analysis techniques.
\end{itemize}

\item[(2)] Go back now to the Hamiltonian $H_t^{h}(B)$ defined by
(\ref{eqhamiltonian-mod-cty-loc-time}) and related to $L^2$ modulus of
continuity of the Brownian local time. As mentioned above, it has been
shown in~\cite{HN09,Ro11} that $h^{-3/2}(H^{h}(B) - \mathbf E[
H^{h}(B) ])$
converges in law to $c_{1}W_{\alpha}$ for a universal constant $c_1$, that
is, relation~(\ref{eqlim-hamiltonian-gamma}) is still formally
satisfied for $\gamma=1$. This noncentral limit theorem indicates that an
interesting phenomenon might occur as far as limiting behavior of the
renormalized quantity $h^{-3/2}(H^{h}(B) - \mathbf E[ H^{h}(B) ])$ on
chaoses is concerned.
We shall specify this with the following result:
\end{longlist}

%
\begin{theorem}\label{thmcvgce-L2-modulus-chaos}
Let $\{H^{h}_{t}(B); t\in[0,1]\}$ be the process defined by (\ref
{eqhamiltonian-mod-cty-loc-time}). For a given random variable $F\in
L^{2}(\Omega)$ and for all $n\geq0$, we set $J_n(F)$ for the projection
of $F$ on the $n$th chaos of $B$, and subsequently
define $X^{n,h}_{t}\equiv J_{n}(H^{h}_{t}(B))$. Then:
\begin{longlist}[(iii)]
\item[(i)] For all $m\ge0$ and all $t\in[0,1]$, $h>0$ we have
$X^{2m+1,h}_{t}=0$.

\item[(ii)] For all $m\ge1$, we have as $h$ tends to zero,
\[
\frac{X^{2m,h} }{h^{2} [\ln(1/h)]^{1/2}} \xrightarrow{(d)} \sigma_{m} W\qquad\mbox{with }
\sigma_{m}^{2}= \frac{c (2m-2)!}{2^{2m} [(m-1)!]^2},
\]
where $W$ stands for a Brownian motion independent of $B$ and where the
convergence takes place in the space $\mathcal{C}([0,1];\mathbb R)$ of real
continuous functions on $[0,1]$.

\item[(iii)]
In particular, the series $\sum_{m\ge1} \sigma_{m}^{2}$ is divergent.
\end{longlist}
\end{theorem}

Putting together the results of \cite{HN09} and our Theorem~\ref{thmcvgce-L2-modulus-chaos}, we thus get the following picture: on the
one hand, one can renormalize the process $H^{h}(B)$ by $h^{3/2}$
in order to get a limit that is a mixture of Gaussian processes (a
noncentral type limit theorem). On the other hand, each projection
$J_n(H^{h}(B))$ can be properly renormalized (by $h^{2} [\ln
(1/h)]^{1/2}$) so as to obtain a limiting object that is a weighted
Brownian motion (corresponding to a central limit theorem).
Nevertheless, the sum of the weights $\sigma_{n}^{2}$ obtained by
projection is divergent. To the best of our knowledge, this interesting
limiting behavior is exhibited here for the first time. Note that it
contrasts, for instance, with the situation described in \cite{HN05}, Theorem~3 (and more specifically in the applications of this result),
where under appropriate variance assumptions, the normal convergence in
each chaos guarantees the normal convergence of the sum.

\begin{longlist}[(3)]
\item[(3)]
Finally, we consider a suitable generalization of Theorem~\ref{thmcvgce-L2-modulus-chaos} to a two-dimensional Brownian motion $B$.
Namely, we shall obtain the following convergence result.
\end{longlist}

%
\begin{theorem}\label{thmcvgce-L2-modulus-chaos-2d}
Let $\{H^{h}_{t}(B); t\in[0,1]\}$ be the process defined by (\ref
{eqhamiltonian-mod-cty-loc-time}), for a two-dimensional Brownian
motion $B$. Like in Theorem~\ref{thmcvgce-L2-modulus-chaos}, we define
$X^{n,h}_{t}$ as the projection on the $n$th chaos of $H^{h}_{t}(B)$.
Then the assertions \textup{(i)--(iii)} of Theorem~\ref{thmcvgce-L2-modulus-chaos} are still valid in this situation, with
\textup{(ii)} replaced with the following statement:
\begin{longlist}[(ii-2d)]
\item[(ii-2d)] For all $m\ge1$, we have as $h$ tends to zero,
\[
\frac{X^{2m,h} }{|h|} \xrightarrow{(d)} \sigma_{m} W,
\]
where $W$ stands for a linear Brownian motion independent of $B$, where
the exact expression of $\sigma_{m}$ will be specified at Section~\ref
{secbehavior-variance-2d} and where the convergence takes place in the
space $\mathcal{C}([0,1];\mathbb R)$ of $\mathbb R$-valued continuous
functions on $[0,1]$.
\end{longlist}
\end{theorem}

It is worthwhile noting that the equivalent of the main result of \cite
{HN10}, namely the convergence in law of a suitably renormalized
version of $H^h_t(B)$, is not available in the two-dimensional case.
Indeed, one can formally show that $|h|^{-2}(H_t^{h}(B) -\mathbf E
[H_t^{h}(B)])$ converges to a random variable of the form $c_2
W_{\alpha
}$, with $\alpha$ defined by~(\ref{eqdef-alpha}) and a universal
constant $c_2$.
Nevertheless, $\alpha$ is a divergent quantity in the two-dimensional case
and the convergence of $h^{-3/2}(H_t^{h}(B) -\mathbf E[H_t^{h}(B)])$ is in
fact an empty statement.

In spite of this lack of convergence, the analysis of projections on
chaoses is still a valuable information for two main reasons: (a) It
indicates that a sort of convergence is at least possible for
$H_t^{h}(B)$. (b) We are able to show that the series $\sum_{m\ge1}
\sigma
_{m}^{2}$ is divergent just as in the one-dimensional case, which seems
to indicate that a noncentral limit theorem is to be expected for the
quantity $(H_t^{h}(B) -\mathbf E[H_t^{h}(B)])$.

The methodology we have followed in order to get the results mentioned
above is based on three main ingredients: (a) Stochastic calculus is
obviously important in this Brownian context, and It\^o formulae of
backward type are invoked in order to control terms of the form $\int_0^r e^{\imath\xi(B_r-B_u)} \,du$ (throughout\vspace*{2pt} the paper, we will write
$\imath$ for the complex number $(-1)^{1/2}$). Theorem~\ref{thmlim-hamiltonian-gamma} will also be a consequence of limit
theorems for martingales according to the behavior of their bracket
process. (b) An important contribution comes from stochastic analysis
techniques: our chaos decompositions are obtained through repeated
applications of Stroock's formula and we use representations of
Brownian local times by means of Watanabe distributions. We also derive
central limit theorems on chaoses by analyzing contractions of kernels
for multiple Wiener integrals, as assessed in \cite{NP,peccati-tudor}.
({c}) After application of the high level tools mentioned above, our
results are reduced to rather elementary (though intricate)
computations, for which we resort to Fourier analysis and thorough
analysis of singularities for integrals defined on simplexes. All those
ingredients are detailed in the corresponding sections.

In the remainder of the paper, each section is devoted to the proof of
one of the theorems given above. Specifically, Section~\ref
{secl2-mod-cty-riesz} handles the noncentral limit Theorem~\ref{thmlim-hamiltonian-gamma}
for Riesz type potentials. Section~\ref
{sec1-dim} is concerned with the central limit Theorems~\ref
{thmcvgce-L2-modulus-chaos} for $L^2$ modulus of one-dimensional local
time on chaoses, while Section~\ref{sec2-dim} deals with
generalizations (Theorem~\ref{thmcvgce-L2-modulus-chaos-2d}) to the
two-dimensional case.

\section{$L^2$ modulus of continuity of Brownian Riesz potentials}\label{secl2-mod-cty-riesz}
This section is devoted to the proof of Theorem~\ref{thmlim-hamiltonian-gamma}. We shall first reduce our problem thanks
to an application of Clark--Ocone's formula, and then identify the
limiting process with a combination of Fourier analysis and stochastic
calculus tools.

\subsection{Reduction of the problem}
In order to proceed with our computations, let us first settle some
useful notation:

%
\begin{notation}\label{notdef-f-beta-Q}
The Gaussian heat kernel on $\mathbb R$ is denoted by $p_t(x)$, namely
%
\begin{equation}
\label{eqdef-gauss-kernel} p_t(z) = (2\pi)^{-1/2} \exp \biggl(-
\frac{z^{2}}{2} \biggr), \qquad z\in\mathbb R.
\end{equation}
For $\beta\in(0,1)$, we call
$f_{\beta}\dvtx \mathbb R^{*}\to\mathbb R^{*}$ the function defined by
$f_{\beta
}(x)=|x|^{-\beta}$. For $\beta\in(0,1)$ and $0\leq r\leq t \leq1$, we
also consider the quantity
%
\begin{eqnarray}\label{eqdef-Q-h-beta}
Q^{h,\beta}_{t,r} &=& \int_{r}^{t}
\,ds \int_{0}^{r} \,du \bigl[K_{s-r}^{\beta}(B_r-B_u+h)
\nonumber\\[-8pt]\\[-8pt]
&&\hspace*{61pt}{} + K_{s-r}^{\beta}(B_r-B_u-h) -2
K_{s-r}^{\beta}(B_r-B_u) \bigr],\nonumber
\end{eqnarray}
where $K^\beta_{u}$ stands for the (convolved) Riesz kernel $K^\beta_{u}:=
f_{\beta} \ast p_{u}'$ for all $u\ge0$.
\end{notation}

With these notation in mind, the Hamiltonian $H_t^{h,\gamma}(B)$ can be
expressed as follows.

%
\begin{lemma}\label{lemH-gamma-f-beta}
For $t\in[0,1]$, consider the quantity $H_t^{h,\gamma}(B)$ defined by
(\ref{eqdef-hamiltonian-gamma}). Then
%
\begin{eqnarray}\label{eqH-gamma-f-beta}
H_t^{h,\gamma}(B) &=& c_{\gamma} \int
_{[0,t]^{2}} \bigl[2 f_{\beta}(B_v-B_u)
\nonumber\\[-8pt]\\[-8pt]
&&\hspace*{39pt}{} -f_{\beta}(B_v-B_u+h) - f_{\beta
}(B_v-B_u-h)\bigr] \,du \,dv\nonumber
\end{eqnarray}
with $\beta=2\gamma-1$.
\end{lemma}

\begin{pf}
Start from expression~(\ref{eqdef-hamiltonian-gamma}) and write
$H_t^{h,\gamma}(B)$ as
\begin{eqnarray*}
&& \int_{\mathbb R} \biggl(\int_{[0,t]^{2}}
\bigl[f_{\gamma}(B_v+x+h) - f_{\gamma}(B_v+x)
\bigr]
\\
&&\hspace*{41pt}{}\times \bigl[f_{\gamma}(B_u+x+h) - f_{\gamma}(B_u+x)
\bigr]\,du \,dv \biggr) \,dx.
\end{eqnarray*}
Next expand the product inside the integral, apply Fubini in order to
integrate with respect to the variable $x$ first and apply the identity
$f_{\gamma}\ast f_{\gamma} = c_{\gamma} f_{2\gamma-1}$. Our
claim is easily
deduced from these elementary manipulations.
\end{pf}

We shall now see that Theorem~\ref{thmlim-hamiltonian-gamma} can be
reduced to the following.

\begin{theorem}\label{thmlim-Qh-dB}
For every $\beta\in(1/2,1]$, consider the process $Q^{h,\beta}$ defined
by~(\ref{eqdef-Q-h-beta}). Then the following limit as $h$ tends to
zero holds true in the space $\mathcal{C}([0,1];\mathbb R)$ of real continuous
functions on $[0,1]$:
%
\begin{equation}
\label{main} \frac{\widetilde{Q}{}^{h}}{h^{5/2-\beta}} \xrightarrow{(d)} c_\beta
W_{\alpha} \qquad\mbox{where } \widetilde{Q} {}^{h}_{t}:=
\int_0^t Q^{h,\beta}_{t,r}
\,dB_r.
\end{equation}
Here, $c_\beta$ is a deterministic constant depending only on $\beta
$, and
the process $W_{\alpha}$ has been introduced at equation (\ref
{eqlim-hamiltonian-gamma}).
\end{theorem}

\begin{pf*}{Proof of the equivalence between Theorems~\ref{thmlim-hamiltonian-gamma}~and~\ref{thmlim-Qh-dB}}
Following expression~(\ref{eqH-gamma-f-beta}), set
\[
H_t^{\beta}(B) = - \int_{[0,t]^{2}} \bigl[2
f_{\beta}(B_v-B_u) -f_{\beta}(B_v-B_u+h)
- f_{\beta
}(B_v-B_u-h) \bigr] \,du \,dv.
\]
Then Lemma~\ref{lemH-gamma-f-beta} asserts that Theorem~\ref{thmlim-hamiltonian-gamma} is proved once we can show that the process
$h^{-(7/2-2\gamma)}(H^{\beta}(B) - \mathbf E[H^{\beta}(B)])$
converges in law
to $c_\beta W_\alpha$ for a strictly positive constant $c_\beta$.
It is
obviously easier to express everything in terms of $\beta=2\gamma-1$, so
that we are reduced to show that $h^{-(5/2-\beta)}(H^{\beta}(B) -
\mathbf E
[H^{\beta}(B)])$ converges in law to $c_\beta W_\alpha$. It should also
be observed that if $\gamma\in(3/4,1)$ then $\beta$ lies into $(1/2,1)$.

Now along the same lines as in \cite{HN09}, a direct application of
Clark--Ocone formula enables to express $H_t^{\beta}(B)$ in the
following way:
\[
H_t^{\beta}(B) - \mathbf E \bigl[H_t^{\beta}(B)
\bigr]= \int_0^t Q^{h,\beta}_{t,r}
\,dB_r,
\]
where the process $Q^{h,\beta}$ is defined at Notation~\ref{notdef-f-beta-Q}. This finishes the proof of our equivalence.
\end{pf*}

With this equivalence in hand, the remainder of the section is now
devoted to the proof of Theorem~\ref{thmlim-Qh-dB}. As mentioned in
the \hyperref[sec1]{Introduction}, our strategy to show this result makes use of some
convenient simplifications offered by a Fourier-transform version of
the problem. As a last preliminary step, let us thus write an
alternative expression for the quantity $Q^{h,\beta}_{t,r}$:

%
\begin{lemma}
Let $\beta\in(1/2,1)$ and $0\leq r\leq t \leq1$. Then
%
\begin{equation}
\label{four-q} \qquad Q^{h,\beta}_{t,r}= \frac{4\imath}{\pi}\int
_{\mathbb R} \biggl[ \bigl(1- e^{-(1/2)\xi
^2(t-r)} \bigr)\psi ( h\xi )
\frac{\xi}{|\xi
|^{3-\beta}} \int_0^r e^{\imath\xi(B_r-B_u)} \,du
\biggr] \,d\xi,
\end{equation}
where $\psi\dvtx \mathbb R\to\mathbb R$ stands for the function defined by
$\psi(\xi):=
\sin^2(\xi/2)$.
\end{lemma}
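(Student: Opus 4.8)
The statement to be established is the Fourier representation \eqref{four-q} for $Q^{h,\be}_{t,r}$, starting from the definition \eqref{eq:def-Q-h-beta} in terms of the convolved Riesz kernel $K^\be_u = f_\be \ast p_u'$. The plan is to compute the Fourier transform of $K^\be_u$ explicitly and then substitute it into \eqref{eq:def-Q-h-beta}, interchanging the resulting $\xi$-integral with the $ds\,du$ integrations.

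\smallskip

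\noindent\textbf{Step 1: Fourier transform of the Riesz kernel.} First I would recall the classical identity for the Fourier transform of $f_\be(x) = |x|^{-\be}$ with $\be \in (0,1)$: up to a constant depending only on $\be$ (and involving $\Gamma$-functions), $\widehat{f_\be}(\xi) = d_\be |\xi|^{\be-1}$. Then, since $p'_u$ has Fourier transform $\imath\xi\, e^{-\frac12 u \xi^2}$ (with the heat kernel normalization of \eqref{eq:def-gauss-kernel} and the convention $\widehat{g}(\xi) = \int e^{-\imath\xi z} g(z)\,dz$), convolution turns into multiplication and one gets
\begin{equation*}
K^\be_u(z) = \frac{1}{2\pi} \int_\R e^{\imath \xi z} \, \widehat{f_\be}(\xi)\, \imath \xi\, e^{-\frac12 u\xi^2} \, d\xi
= \frac{d_\be \, \imath}{2\pi} \int_\R e^{\imath \xi z} \, \frac{\xi}{|\xi|^{1-\be}} \, e^{-\frac12 u\xi^2} \, d\xi .
\end{equation*}
Note $\xi/|\xi|^{1-\be} = \xi \, |\xi|^{\be-1}$, which will combine below with another factor $|\xi|^{-2}$ coming from the $s$-integration to produce the exponent $3-\be$ in \eqref{four-q}.

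\smallskip

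\noindent\textbf{Step 2: Assembling the bracket and integrating in $s$.} Insert this representation into the bracket $K^\be_{s-r}(B_r-B_u+h) + K^\be_{s-r}(B_r-B_u-h) - 2K^\be_{s-r}(B_r-B_u)$ appearing in \eqref{eq:def-Q-h-beta}. The three terms factor through $e^{\imath\xi(B_r-B_u)}$ times $e^{\imath\xi h} + e^{-\imath\xi h} - 2 = 2(\cos(\xi h) - 1) = -4\sin^2(\xi h/2) = -4\psi(h\xi)$, which is exactly where $\psi$ enters. Then perform the $\int_r^t ds$ integration: $\int_r^t e^{-\frac12 \xi^2 (s-r)}\, ds = \frac{2}{\xi^2}\bigl(1 - e^{-\frac12 \xi^2 (t-r)}\bigr)$, producing the factor $1 - e^{-\frac12\xi^2(t-r)}$ and an extra $|\xi|^{-2}$. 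Collecting all constants ($d_\be$, the $-4$, the $2$ from the $s$-integral, the $\imath$, the $\frac{1}{2\pi}$) should yield the prefactor $\frac{4\imath}{\pi}$ — here I would match constants carefully, adjusting the implicit definition of $c_\ga$/$c_\be$ if needed so that the stated normalization holds. Finally the $\int_0^r du$ integration just carries through to give $\int_0^r e^{\imath\xi(B_r-B_u)}\,du$ inside the bracket.

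\smallskip

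\noindent\textbf{Main obstacle.} The only genuine subtlety is justifying the interchange of the $\xi$-integral with the $ds$ and $du$ integrals (and, before the $s$-integration is done, the fact that $\xi \mapsto \xi|\xi|^{\be-1} e^{-\frac12 u\xi^2}$ is integrable near $\xi = 0$ for $\be \in (1/2,1)$ since $\be - 1 > -1$, and the Gaussian controls large $\xi$). After the $s$-integration the integrand behaves like $|\xi|^{\be-3}$ near infinity — not absolutely integrable — so the exchange must be understood in the appropriate sense: one should do the $ds$-integration \emph{after} pulling everything together, i.e. keep the $e^{-\frac12\xi^2(s-r)}$ factor (which restores absolute integrability in $\xi$) until Fubini on the finite measure $ds\,du$ on $[r,t]\times[0,r]$ has been applied, and only then integrate in $s$. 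The small-$\xi$ behavior of the final integrand, $\psi(h\xi) \sim (h\xi)^2/4$ and $1 - e^{-\frac12\xi^2(t-r)} \sim \frac12\xi^2(t-r)$, gives an overall $|\xi|^{4}\cdot|\xi|^{\be-3} = |\xi|^{1+\be}$ near zero, so \eqref{four-q} is a well-defined convergent integral (at $\xi=0$; the behavior at infinity being handled by the oscillatory/martingale structure of $\int_0^r e^{\imath\xi(B_r-B_u)}\,du$, which is not needed for the identity itself but matters for the subsequent analysis). I would present Step 1 as a lemma-free computation citing the standard Riesz-potential Fourier identity, then carry out Steps 2 with Fubini justified on the finite simplex measure.
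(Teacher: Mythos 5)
Your proof follows essentially the same route as the paper: write $K^\be_u$ via its Fourier transform, substitute into \eqref{eq:def-Q-h-beta}, use $e^{\imath\xi h}+e^{-\imath\xi h}-2=-4\psi(h\xi)$ together with $\int_r^t e^{-\frac12\xi^2(s-r)}\,ds=\frac{2}{\xi^2}\bigl(1-e^{-\frac12\xi^2(t-r)}\bigr)$, and apply Fubini over the finite simplex (the paper is silent on the constants, which are anyway absorbed into $c_\be$). One small inaccuracy in your closing remark: after the $s$-integration the integrand is dominated at infinity by $r\,|\xi|^{\be-2}$, which \emph{is} absolutely integrable for $\be<1$, so the representation \eqref{four-q} converges absolutely and the extra caution you describe, while harmless, is not actually needed.
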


\begin{pf}
It is well known that for all $x\in\mathbb R^{*}$ we have
\[
K^\beta_t(x)=-\frac{\imath}{2\pi} \int_{\mathbb R}
e^{\imath\xi x} \frac{\xi
}{|\xi|^{1-\beta}} e^{-(t \xi^2)/2} \,d\xi.
\]
Plugging this identity into~(\ref{eqdef-Q-h-beta}) and applying
Fubini's theorem, we get
\[
-\frac{\imath}{2\pi} \int_{\mathbb R} \biggl[ \biggl(\int
_{r}^{t} e^{-{((s-r) \xi^2)}/{2}} \,ds \biggr) \int
_{0}^{r} \frac{\xi}{|\xi|^{1-\beta}} e^{\imath\xi(B_r-B_u)}
\bigl(e^{\imath\xi h} + e^{-\imath\xi h} -2 \bigr) \,du \biggr] \,d\xi
\]
from which identity~(\ref{four-q}) is easily deduced.
\end{pf}

We now start by identifying the main contribution in the quantity $\int_0^t Q^{h,\beta}_{t,r} \,dB_r$ appearing in~(\ref{main}) by means of
our Fourier representation~(\ref{four-q}).

\subsection{Elimination of some negligible terms}

The\vspace*{1pt} first term which might yield a negligible contribution in
$\widetilde{Q}{}^{h}$
is given\vspace*{1pt} by the small exponential term $e^{-((t-r) \xi^2)/{2}}$ in
expression~(\ref{four-q}). We thus set $Q^{h,\beta}_{t,r}=Q^{h,\beta,1}_{r}-A^{h}_{t,r}$, with
%
\begin{eqnarray}
Q^{h,\beta,1}_{r} &=& \frac{4\imath}{\pi}\int_{\mathbb R}
\biggl[\psi ( h\xi ) \frac{\xi}{|\xi|^{3-\beta}} \int_0^r
e^{\imath\xi(B_r-B_u)} \,du \biggr] \,d\xi,\label{eqdef-Q-h-1}
\\
A^{h}_{t,r} &=& \frac{4\imath}{\pi}\int_{\mathbb R}
\biggl[e^{-(1/2)\xi^2(t-r)} \psi ( h\xi ) \frac{\xi}{|\xi|^{3-\beta}} \int
_0^r e^{\imath\xi(B_r-B_u)} \,du \biggr] \,d\xi.
\label{eqdef-A-h}
\end{eqnarray}
Then the following proposition identifies a first vanishing term.

%
\begin{proposition}\label{propa}
Let $A^{h}$ be the process defined by~(\ref{eqdef-A-h}), and for
$t\in
[0,1]$ set
\[
\widetilde{A} {}^h_t:=\frac{1}{h^{5/2-\beta}} \int
_0^t A_{t,r}^h
\,dB_r.
\]
Then we have:
\begin{longlist}[(iii)]
\item[(i)] For every fixed $t\in[0,1]$, $\widetilde{A}{}^h_t \to0$ in
$L^2(\Omega)$ as $h$ tends to zero.

\item[(ii)] There exist $p\geq1$ and $\alpha>0$ such that for all
$0\leq s <t \leq1$ and every $h\in(0,1)$,
\[
\mathbf E \bigl[ \bigl| \widetilde{A} {}^h_t- \widetilde{A}
{}^h_s \bigr|^{2p} \bigr] \leq c_p
h^{2p(\beta-(1/2))} \llvert t-s \rrvert ^{1+\alpha}
\]
for some constant $c_p$ depending only on $p$.

\item[(iii)]
As a consequence, we have $\widetilde{A}{}^h \xrightarrow{(d)} 0$ in
$\mathcal
{C}([0,1];\mathbb R)$ as $h$ tends to zero.
\end{longlist}
\end{proposition}

\begin{pf} Let us prove the three items separately:
\begin{longlist}[(ii)]
\item[(i)] Consider a given $t\in[0,1]$. One has
\begin{eqnarray*}
&& \mathbf E \biggl[ \biggl(\int_0^t
A_{t,r}^h \,dB_r \biggr)^2 \biggr]
\\
&&\qquad = 2 \int_0^t dr \int_{\mathbb R}
d\xi\int_{\mathbb R} d\eta\int_0^r
dv \int_0^v du
\frac{\xi e^{-(1/2)\xi^2(t-r)} \psi(h\xi)}{|\xi|^{3-\beta}}
\\
&&\hspace*{178pt}{}\times \frac{\eta e^{-(1/2)\eta^2(t-r)} \psi(h\eta)}{|\eta
|^{3-\beta}}
\\
&&\hspace*{178pt}{}\times  \mathbf E \bigl[ e^{\imath(\xi+\eta)(B_r-B_v)+\imath\eta
(B_v-B_u)} \bigr].
\end{eqnarray*}
Furthermore, for $u<v<r<t$ we have
\[
0 \le\mathbf E \bigl[e^{\imath(\xi+\eta)(B_r-B_v)+\imath\eta
(B_v-B_u)} \bigr] = e^{-((\xi+\eta)^2/2)(r-v)} e^{-(\eta^2/2)(v-u)}
\le e^{-(\eta^2/2)(v-u)}.
\]
Now integrate this inequality in $u$ and invoke the fact that $\psi
(z)\le c z^2$ in order to get
%
\begin{equation}
\label{majo} \mathbf E \biggl[ \biggl(\int_0^t
A_{t,r}^h \,dB_r \biggr)^2 \biggr]
\le c h^4 \int_0^t dr \int
_0^r dv \int_{\mathbb R} d\xi
\int_{\mathbb R} d\eta\, \ell_{r,v}^{t}(\xi,
\eta),
\end{equation}
where
\[
\ell_{r,v}^{t}(\xi,\eta) \equiv e^{-(1/2)\xi^2(t-r)}e^{-(1/2)\eta^2(t-r)}
\frac{|\xi|^\beta
}{|\eta
|^{2-\beta}} \bigl\{1-e^{-(\eta^2/2)v} \bigr\}.
\]
To see that the integral in the right-hand side of~(\ref{majo}) is
indeed finite, observe first that $\int_{\mathbb R} e^{-(a/2)\xi^2}|\xi
|^\beta \,d\xi= c_{\beta} a^{-(1+\beta)/2}$ for any $a>0$
and $\beta\in
(0,1)$. Thus,
\begin{eqnarray*}
\int_{\mathbb R} d\xi\int_{-1}^1
d\eta \ell_{r,v}^{t}(\xi,\eta) &\leq& c \int
_{\mathbb R} e^{-(1/2)\xi^2(t-r)} |\xi|^\beta \,d\xi \int
_{-1}^1 |\eta|^\beta \,d\eta
\\
&\leq& c \int_{\mathbb R} d\xi e^{-(1/2)\xi^2(t-r)}|\xi|^\beta
\\
&\leq&\frac{c}{|t-r|^{(1+\beta)/{2}}}.
\end{eqnarray*}
In the same way, since $\beta\in(0,1)$ we also have
\begin{eqnarray*}
\int_{\mathbb R} d\xi\int_{|\eta|\geq1} d\eta \ell
_{r,v}^{t}(\xi,\eta) &\leq& c \int_{|\eta|\geq1} |
\eta|^{-(2-\beta)} \,d\eta \int_{\mathbb R} e^{-(1/2)\xi^2(t-r)}|
\xi|^\beta \,d\xi
\\
&\leq&\frac{c}{|t-r|^{(1+\beta)/{2}}}.
\end{eqnarray*}
Plugging these estimates into~(\ref{majo}) and taking into account the
fact that $\beta\in(0,1)$, we end up with
\[
\mathbf E \biggl[ \biggl(\int_0^t
A_{t,r}^h \,dB_r \biggr)^2 \biggr]
\le c_{t,\beta} h^{4} \int_{0}^{t}
\frac{dr}{|t-r|^{({1+\beta})/{2}}} \le c_{t,\beta} h^{4},
\]
which yields our first claim (i).

\item[(ii)] In order to bound the increment $\widetilde
{A}{}^h_t-\widetilde{A}{}^h_s$, set
\[
k_{h,t}(\xi):=e^{-(1/2) \xi^2 t}
\psi(h\xi) \frac{\xi
}{|\xi|^{3-\beta}}.
\]
Then it is readily checked that
\begin{eqnarray}
\label{eqdcp-tilde-A-h} \widetilde{A} {}^h_t-\widetilde{A}
{}^h_s &=& \frac{1}{h^{5/2-\beta}}\int_s^t
dB_r \int_{\mathbb R} d\xi k_{h,t-r}(\xi) \int
_0^r e^{\imath\xi(B_r-B_u)} \,du\nonumber\hspace*{-15pt}
\\
&&{} +\frac{1}{h^{5/2-\beta}}\int_0^s
dB_r \int_{\mathbb R} d\xi \bigl[ k_{h,t-r}(
\xi)-k_{h,s-r}(\xi) \bigr] \int_0^r
e^{\imath\xi
(B_r-B_u)} \,du\hspace*{-15pt}
\\
&:=& \widetilde{A} {}^{h,1}_{s,t}+
\widetilde{A} {}^{h,2}_{s,t}.\nonumber\hspace*{-15pt}
\end{eqnarray}
Consider first $\widetilde{A}{}^{h,1}_{s,t}$ and write
\[
\widetilde{A} {}^{h,1}_{s,t} = \frac{1}{h^{5/2-\beta}}\int
_s^t H_{t,r} \,dB_r \qquad
\mbox{with } H_{t,r}:=\int_{\mathbb R} d\xi
k_{h,t-r}(\xi) \int_0^r
e^{\imath
\xi
(B_r-B_u)} \,du.
\]
By using successively Burkholder--Davies--Gundy and Cauchy--Schwarz
inequalities, we get
%
\begin{equation}
\label{esti-proof} \mathbf E \bigl[ \bigl| \widetilde{A} {}^{h,1}_{s,t}\bigr|^{2p}
\bigr] \leq \frac{c_p}{h^{(5-2\beta)p}} \biggl( \int_s^t
\mathbf E \bigl[|H_{t,r} |^{2p} \bigr]^{1/p} \,dr
\biggr)^p,
\end{equation}
%
with
\begin{eqnarray*}
\mathbf E \bigl[|H_{t,r} |^{2p} \bigr] &=&c_p
\int_{\mathbb R^{2p}} d\xi_1 \cdots \,d\xi_{2p} \int
_{0<u_1<\cdots<u_{2p}
<r} du_1 \cdots \,du_{2p}
\\
&&{}\times \prod_{j=1}^{2p} k_{h,t-r}(
\xi_j) \mathbf E \bigl[e^{\imath\xi
_j(B_r-B_{u_j})} \bigr],
\end{eqnarray*}
which can also be expressed as
\begin{eqnarray*}
\mathbf E \bigl[|H_{t,r} |^{2p} \bigr] &=& c_p
\int_{\mathbb R^{2p}} d\xi_1 \cdots \,d\xi_{2p} \int
_{0<u_1<\cdots<u_{2p}
<r} du_1 \cdots \,du_{2p}
\\
&&{}\times \prod_{j=1}^{2p} k_{h,t-r}(
\xi_j) e^{-(1/2) \xi_1^2 (u_2-u_1)}
\\
&&\hspace*{26pt}{}\times  e^{-(1/2) (\xi_1+\xi_2)^2 (u_3-u_2)} \cdots e^{-(1/2) (\xi_1+\cdots+\xi_{2p})^2 (r-u_{2p})}.
\end{eqnarray*}
We can then rely on the uniform estimate
\[
\bigl| k_{h,t-r}(\xi_i) \bigr|\leq c h^2 e^{-(1/2) \xi_i^2(t-r)}
|\xi _i|^\beta \leq c \frac{h^2}{\llvert t-r \rrvert ^{\beta/2}}
\]
and the fact that
\begin{eqnarray*}
&& \int_{\mathbb R} d\xi_1 e^{-(1/2) \xi_1^2
(u_2-u_1)}\int
_{\mathbb R} d\xi_2 e^{-(1/2) (\xi_1+\xi_2)^2 (u_3-u_2)}\cdots
\\
&&\quad{} \times\int
_{\mathbb R} \,d\xi _{2p} e^{-(1/2) (\xi_1+\cdots+\xi_{2p})^2 (r-u_{2p})}
\\
&&\qquad = \int_{\mathbb R} d\xi_1 e^{-(1/2) \xi_1^2 (u_2-u_1)}\int
_{\mathbb R} d\xi _2 e^{-(1/2) \xi_2^2 (u_3-u_2)} \cdots\int
_{\mathbb R} d\xi _{2p} e^{-(1/2) \xi_{2p}^2 (r-u_{2p})}
\\
&&\qquad = c_p (u_2-u_1)^{-1/2}
(u_3-u_2)^{-1/2} \cdots(r-u_{2p})^{-1/2}
\end{eqnarray*}
in order to get
\[
\mathbf E \bigl[|H_{t,r} |^{2p} \bigr] \le
\frac{c_p h^{4p}
r^{p}}{|t-r|^{\beta p}}.
\]
Plugging this estimate into~(\ref{esti-proof}), we end up with
%
\begin{equation}
\label{eqbound-tilde-A-1} E \bigl[ \bigl| \widetilde{A} {}^{h,1}_{s,t}\bigr|^{2p}
\bigr]\leq c_p h^{2p(\beta-1/2)} \llvert t-s \rrvert ^{(1-\beta)p}.
\end{equation}
The bound for $\widetilde{A}{}^{h,2}_{s,t}$ can be derived from a similar
procedure. Observe, for instance, that
\begin{eqnarray*}
\bigl| k_{h,t-r}(\xi)-k_{h,s-r}(\xi) \bigr| &\leq& h^2 \bigl|
e^{-(1/2) \xi
^2(t-r)}-e^{-(1/2) \xi^2 (s-r)} \bigr| |\xi|^\beta
\\
&\leq& c h^2
\llvert t-s \rrvert ^{\varepsilon} \llvert s-r \rrvert ^{(1-\varepsilon)/2}
\end{eqnarray*}
and invoking this bound for $\varepsilon:=(1-\beta)/3$ one obtains that
inequality~(\ref{eqbound-tilde-A-1}) also holds true for $\widetilde
{A}{}^{h,2}_{s,t}$. Going back to~(\ref{eqdcp-tilde-A-h}), we see that
the bounds on $\widetilde{A}{}^{h,1}_{s,t}$ and $\widetilde{A}{}^{h,2}_{s,t}$
easily yield our claim (ii). Assertion (iii) is now a standard
consequence of (i)~and~(ii).\quad\qed
\end{longlist}\noqed
\end{pf}

Let\vspace*{1pt} us go back to expression~(\ref{four-q}), as well as the
decomposition~(\ref{eqdef-Q-h-1}) and~(\ref{eqdef-A-h}) for
$Q^{h,\beta
}$. Proposition~\ref{propa} allows to reduce our study to an analysis
of $\widetilde{Q}{}^{h,\beta,1}$ defined by $\widetilde{Q}{}^{h,\beta,1}_{t}=
h^{-(5/2-\beta)}\int_{0}^{t} Q^{h,\beta,1}_{r} \,dB_r$, where $Q^{h,\beta,1}$ is given by
(\ref{eqdef-Q-h-1}). In order to identify another negligible term
within $\widetilde{Q}{}^{h,\beta,1}$, let us resort to It\^o's formula
applied to the
(backward) Brownian motion $\widehat{B}{}^r=\{B_r-B_u; 0\le u \le r\}
$ and
$f(x):=e^{\imath\xi x}$. This gives
%
\begin{equation}
\label{eqito-exp-iBt} \int_0^r e^{\imath\xi(B_r-B_u)} \,du
= - \frac{2 (e^{\imath\xi B_r} -1  )}{\xi^2} + \frac
{2\imath}{\xi} \int_0^r
e^{\imath(B_r-B_u)} \,d\widehat{B} {}^r_u
\end{equation}
and plugging this identity into~(\ref{eqdef-Q-h-1}) we get
$Q^{h,\beta,1}_{r}=D^{h}_{r}-Q^{h,\beta,2}_{r}$, with
%
\begin{eqnarray}
D^{h}_{r} &=& \frac{8\imath}{\pi}\int_{\mathbb R}
\biggl[\frac{\xi
\psi( h\xi
)}{|\xi|^{5-\beta}} \bigl(e^{\imath\xi B_r} -1 \bigr) \biggr] \,d
\xi,\label{eqdef-D-h}
\\
Q^{h,\beta,2}_{r} &=& \frac{8}{\pi}\int_{\mathbb R}
\biggl[\frac
{\psi ( h\xi
)}{|\xi|^{3-\beta}} \int_0^r
e^{\imath\xi(B_r-B_u)} \,d\widehat{B} {}^r_u \biggr] \,d\xi.
\label{eqdef-Q-h-2}
\end{eqnarray}
We now prove the following proposition.

%
\begin{proposition}\label{propD-vanish}
Let $D^{h}$ be the process defined by~(\ref{eqdef-D-h}), and for
$t\in
[0,1]$ set
\[
\widetilde{D} {}^h_t:=\frac{1}{h^{5/2-\beta}} \int
_0^t D_{r}^h
\,dB_r.
\]
Then the conclusions of Proposition~\ref{propa} hold true for
$\widetilde{D}{}^h$.
\end{proposition}

\begin{pf}
The proof goes along the same lines as for Proposition~\ref{propa},
and is left to the reader for the sake of conciseness. Let us just
highlight the following decomposition:
\[
\mathbf E \bigl[ \bigl(\widetilde{D} {}^h_t
\bigr)^{2} \bigr] \leq c h^{2\beta-1} \int_0^t
\mathbf E^{2} \bigl[ B_r^2 \bigr] \,dr \biggl(
\int_{-1}^1 \frac{d\xi}{|\xi|^{1-\beta}}
\biggr)^2+c h^{2\beta
-1} \biggl(\int_{|\xi|\geq1}
\frac{d\xi}{|\xi|^{2-\beta}} \biggr)^2,
\]
which allows us to conclude that $\lim_{h\to0}\mathbf E[( \widetilde{D}{}^h_t
)^{2}]=0$ since $1/2<\beta<1$.
\end{pf}

%
\begin{remark}
With Propositions~\ref{propa}~and~\ref{propD-vanish} in
hand, Theorem~\ref{thmlim-Qh-dB} now boils down to the following property:
%
\begin{equation}
\label{mart} \frac{M^h}{h^{5/2-\beta}} \xrightarrow{(d)} c_\beta
W_{\alpha
}\qquad\mbox{in } \mathcal{C} \bigl([0,1];\mathbb R \bigr)
\mbox{ with } M^{h}_{t}:= \int_{0}^{t}
Q^{h,\beta,2}_{r} \,dB_{r},
\end{equation}
where $Q^{h,\beta,2}$ is the process defined by~(\ref{eqdef-Q-h-2}). It
should be observed that $M^{h}$ is now a Brownian martingale, for which
specific limit theorems are available.
\end{remark}

\subsection{Study of the martingale term}
Similar to the argument used in \cite{HN09,HN10,Ro11}, our strategy
toward~(\ref{mart}) is now based on the martingale convergence
criterion summed up in \cite{hayashi-mykland}, Theorem~A.1. Using the
latter result, the proof of~(\ref{mart}) reduces to showing that, as
$h\to0$, we have simultaneously
%
\begin{equation}
\label{mart-equiv} \frac{\langle M^h,B \rangle_t}{h^{5/2-\beta}} \to0\quad\mbox{and}\quad \frac{\langle M^h \rangle_t}{h^{5-2\beta}}
\to c_\beta \alpha_t
\end{equation}
in $L^2(\Omega)$ for every fixed $t\in[0,1]$, with $\alpha_t$
defined by
(\ref{eqdef-alpha}).

To this aim, let us start by recasting $M^h$ in a suitable way. Indeed,
thanks to a stochastic Fubini theorem we have
%
\begin{equation}
\label{eqrep-Q2-normalized} \frac{Q^{h,\beta,2}_{r}}{h^{5/2-\beta}}= \int_0^r
g_h(B_r-B_u) \,d \widehat{B}
{}^r_u,
\end{equation}
where
%
\begin{equation}
\label{eqdef-gh-fh} g_h=g_h^\beta:=
\mathcal{F}(f_h) \qquad\mbox{with } f_h(\xi
)=f_h^\beta(\xi):=\frac{1}{h^{5/2-\beta}} \frac{\psi(h\xi)}{|\xi|^{3-\beta}}.
\end{equation}
In the course of the reasoning, we shall appeal to the following key
properties of~$g_h$:

%
\begin{lemma}\label{lemg-h}
It holds that:
\begin{longlist}[(iii)]
\item[(i)] For some $c_{\beta}$ independent of $h$, we have $\int_{\mathbb R}
g_h(x)^2 \,dx=c_{\beta} >0$.

\item[(ii)] Recalling that $p_t$ stands for the Gaussian heat kernel
defined by~(\ref{eqdef-gauss-kernel}), we have for every $t\in(0,1]$:
%
\begin{equation}
\label{regu} \int_{\mathbb R} g_h(x)p_t(x)
\,dx \leq\frac{c h^{\beta
-1/2}}{t^{\beta/2}}.
\end{equation}

\item[(iii)]
The function $g_h$ can also be written as
%
\begin{equation}
\label{repres} g_h(x)=\frac{c}{h^{5/2-\beta}} \int_{x-h}^{x+h}
\frac
{(h-|x-y|)}{|y|^\beta} \,dy.
\end{equation}
In particular, $g_h(-x)=g_h(x)$ and $g_h(x)\geq0$ for all $x\in
\mathbb R$.

\item[(iv)] For every $\varepsilon>0$ such that $\beta
>1/2+\varepsilon$, every $h\leq
1/4$ and every $|x|\geq\sqrt{h}$,
%
\begin{equation}
g_h^\beta(x)\leq c h^{\varepsilon/2} g_h^{\beta-\varepsilon}(x).
\end{equation}
\end{longlist}
\end{lemma}

\begin{pf}
By Fourier isometry,
\[
\| g_h\|_{L^2}^2=\|f_h
\|_{L^2}^2 =\frac{1}{h^{5-2\beta}} \int_{\mathbb R}
\frac{\psi^2(h\xi)}{|\xi
|^{6-2\beta}} \,d\xi =\int_{\mathbb R} \frac{\psi^2(\xi)}{|\xi|^{6-2\beta}} \,d\xi,
\]
which gives (i). In order to prove (ii) use Fourier isometry again,
which according to~(\ref{eqdef-gh-fh}) yields
\begin{eqnarray*}
\int_{\mathbb R} g_h(x)p_t(x) \,dx &=&
\frac{c}{h^{5/2-\beta}} \int_{\mathbb R} \frac{\psi(h\xi)}{|\xi
|^{3-\beta}}
e^{-(t\xi^2)/2} \,d\xi
\\
&\le& c h^{\beta-1/2} \int_{\mathbb R}
\frac{e^{-(t\xi^2)/2}}{|\xi|^{1-\beta
}} \,d\xi \le\frac{c h^{\beta-1/2}}{t^{\beta/2}}.
\end{eqnarray*}

For (iii), observe that
\[
f_h(\xi)=h^{1/2} \varphi(h\xi)\qquad\mbox{with }
\varphi(u)=\frac{\operatorname{sinc}^2(u)}{|u|^{1-\beta}},
\]
where the $\operatorname{sinc}$ function refers to $\operatorname
{sinc}(x)=\frac{\sin
(x)}{x}$. Thus, using the fact\break  $\mathcal{F}(\operatorname
{sinc}^2(\cdot))(\xi
)=\mathbf{1}_{[-1,1]}(\xi)(1-|\xi|)$, we get
\begin{eqnarray*}
g_h(\xi) &=& \mathcal{F}(f_h) (\xi) = \frac{1}{h^{1/2}}
\mathcal{F}(\varphi) \biggl( \frac{\xi}{h} \biggr) = \frac{1}{h^{1/2}}
\bigl[ \mathcal{F} \bigl(|\cdot|^{-1+\beta
} \bigr)\ast \mathcal{F} \bigl(
\operatorname{sinc}^2(\cdot) \bigr) \bigr] \biggl( \frac{\xi}{h}
\biggr)
\\
&=&\frac{c}{h^{1/2}} \int_{(\xi/h)-1}^{(\xi/h)+1}
\frac
{dy}{|y|^\beta} \biggl( 1- \biggl| \frac{\xi}{h}-y \biggr| \biggr),
\end{eqnarray*}
which clearly leads to~(\ref{repres}).

Now we can use~(\ref{repres}) in order to prove (iv): for $x>\sqrt {h}$, write
\[
g_h^\beta(x)= c h^\varepsilon\frac{1}{h^{5/2-(\beta-\varepsilon
)}} \int
_{x-h}^{x+h} \frac{h-|x-y|}{|y|^\varepsilon|y|^{\beta-\varepsilon}} \,dy
\\
\leq\frac{c h^\varepsilon g_h^{\beta-\varepsilon
}(x)}{|x-h|^\varepsilon} \leq c h^{\varepsilon
/2} g_h^{\beta-\varepsilon}(x),
\]
since $|x-h| \geq\frac{1}2 \sqrt{h}$. By symmetry of $g_h$, this
completes our proof.
\end{pf}

Let us develop now the strategy for the convergence of the martingale
term, which has been summarized in~(\ref{mart-equiv}). We shall prove
the first claim of~(\ref{mart-equiv}), namely the following.

%
\begin{proposition}\label{propbracket-M-B-vanish}
For all $t\in[0,1]$, the martingale term $M^{h}$ satisfies
\[
\frac{\mathbf E [\langle M^h,B\rangle_{t}^{2} ]}{h^{5-2\beta
}} \le c_{t} h^{\beta-1/2},
\]
where $c_t$ is a uniformly bounded function of $t\in[0,1]$.
\end{proposition}

\begin{pf}
According to~(\ref{mart}) and~(\ref{eqrep-Q2-normalized}), one has
\[
\frac{\langle M^h,B\rangle_t}{h^{5/2-\beta}} = \frac{\int_{0}^{t} Q_{r}^{\beta,h,2} \,dr}{h^{5/2-\beta}} = \int_0^t
dr \int_0^r d\widehat{B} {}^r_u
g_h(B_r-B_u).
\]
Hence,
\[
\frac{\mathbf E [\langle M^h,B\rangle_t^{2} ]}{h^{5-2\beta}} =2\int_0^t
dr_1 \int_0^{r_1} dr_2
\int_0^{r_2} du \mathbf E \bigl[
g_h(B_{r_1}-B_u)g_h(B_{r_2}-B_u)
\bigr]
\]
and furthermore,
\begin{eqnarray*}
&& \mathbf E \bigl[ g_h(B_{r_1}-B_u)g_h(B_{r_2}-B_u)
\bigr]
\\
&&\qquad  = \mathbf E \bigl[ g_h \ast p_{r_1-r_2}(B_{r_2}-B_u)
g_h(B_{r_2}-B_u) \bigr]
\\
&&\qquad = \int_{\mathbb R} d\xi [g_h \ast p_{r_1-r_2} ]
(\xi) g_h(\xi) p_{r_2-u}(\xi) \leq c \|g_h \ast
p_{r_1-r_2}\|_\infty\frac{h^{\beta-1/2}}{\sqrt{r_2-u}},
\end{eqnarray*}
thanks to~(\ref{regu}). In addition, $\|g_h \ast p_{r_1-r_2}\|_\infty
\leq\|g_h\|_{L^2} \|p_{r_1-r_2}\|_{L^2} \leq c |r_1-r_2|^{-1/4}$, and thus
\[
\frac{\mathbf E [\langle M^h,B\rangle_t^{2} ]}{h^{5-2\beta}} \le c h^{\beta-1/2} \int_0^t
dr_1 \int_0^{r_1} dr_2
|r_1-r_2|^{-1/4} \int_0^{r_2}
du |r_2-u|^{-1/2}
\]
from which our claim is easily deduced.
\end{pf}

Before we proceed with the proof of~(\ref{mart-equiv}), let us label a
technical lemma on Brownian local times.

%
\begin{lemma}\label{lemloc-time-BM}
Let $\{L_t(a); t\in[0,1], a\in\mathbb R\}$ be the local time
process of
Brownian motion on the interval $[0,1]$. Then there exist $\varepsilon
>0$ and a
strictly positive constant $c$ such that
\[
\sup_{x\in\mathbb R,t\in[0,1]}\mathbf E \bigl[\bigl|L_t(x+B_t)\bigr|^{2}
\bigr] \le c
\]
and
\[
\sup_{t\in[0,1]} \mathbf E \Bigl[
\sup_{|x-y|<h^{1/2}} \bigl\llvert L_t(x)-L_t(y)
\bigr\rrvert ^{2} \Bigr] \le c h^{\varepsilon}.
\]
\end{lemma}

\begin{pf}
By applying Tanaka's formula to the backward Brownian motion $\widehat B$,
we get, for all $x\in\mathbb R$,
%
\begin{equation}
\label{eqtanaka-backward} \bigl|L_t(x+B_t)\bigr| \leq2 |B_t|+2
\biggl\llvert \int_0^t \mathbf{1}_{\{\widehat
{B}{}^t_s<-x\}}
\,d\widehat B^t_s \biggr\rrvert
\end{equation}
and the first assertion immediately follows. The second assertion of
our lemma can be derived from \cite{barlow-yor}, item (ii).
\end{pf}

We are now ready to prove the second part of assertion (\ref
{mart-equiv}), that is, the following proposition.

%
\begin{proposition}
Let $t$ be an arbitrary time in $[0,1]$. Then we have:
%
\begin{equation}
\label{eqL2-cvgce-bracket-Mh} L^{2}(\Omega)-\lim_{h\to0}
\frac{\langle M^h \rangle_t}{h^{5-2\beta}} = c_\beta \alpha_t,
\end{equation}
where $\alpha$ is the self-intersection local time defined by (\ref
{eqdef-alpha}).
\end{proposition}

\begin{pf}
Let us start by applying again the backward It\^o formula (\ref
{eqito-exp-iBt}) in order to get the decomposition
\[
\bigl\langle M^h \bigr\rangle_t=\int
_0^t \,dr \biggl( \int_0^r
\,d\widehat{B} {}^r_u g_h(B_r-B_u)
\biggr)^2:= N^{h,1}_{t} + N^{h,2}_{t}
\]
with
\begin{eqnarray*}
N^{h,1}_{t} &=& \int_0^t
\,dr \int_0^r \,du \bigl[g_h(B_r-B_u)
\bigr]^2,
\\
N^{h,2}_{t} &=& 2\int_0^t
\,dr \int_0^r \,d\widehat{B} {}^r_u
\biggl( g_h(B_r-B_u) \int
_u^r \,d\widehat{B} {}^r_s
g_h(B_r-B_s) \biggr).
\end{eqnarray*}
We shall now divide our proof in two steps.
\begin{longlist}[\textit{Step} 1.]
\item[\textit{Step} 1.]
\textit{$N^{h,2}$ vanishes as $h\to0$.} Specifically, we shall
prove that $L^{2}(\Omega)-\lim_{h\to0} N^{h,2}_{t}=0$. Indeed, it is
readily checked that
%
\begin{eqnarray}
\label{proof-n-h-2}
&& \mathbf E \biggl[ \biggl(\int_0^t
dr \int_0^r d\widehat B^r_u
\biggl( g_h(B_r-B_u) \int
_u^r d\widehat{B} {}^r_s
g_h(B_r-B_s) \biggr) \biggr)^2
\biggr]
\nonumber
\\
&&\qquad = 2\int_0^t ds \int_s^t
du \int_u^t dr_1 \int
_u^{r_1} dr_2 \mathbf E
\bigl[g_h(B_{r_1}-B_s)g_h(B_{r_1}-B_u)
\\
&&\hspace*{171pt}{}\times
g_h(B_{r_2}-B_s)g_h(B_{r_2}-B_u)
\bigr].\nonumber
\end{eqnarray}
Furthermore, using the fact that $g_h$ is positive (Lemma~\ref{lemg-h}(iii)), we have, for fixed $0<s<u<r_2<r_1<t$,
\begin{eqnarray*}
&& \mathbf E \bigl[ g_h(B_{r_1}-B_s)g_h(B_{r_1}-B_u)|
\mathcal {F}_{r_2} \bigr]
\\
&&\qquad =\int_{\mathbb R}
g_h(x+B_{r_2}-B_s)g_h(x+B_{r_2}-B_u)p_{r_1-r_2}(x)
\,dx
\\
&&\qquad \leq \|p_{r_1-r_2}\|_\infty\|g_h
\|_{L^2}^2 \leq\frac{c}{\sqrt {r_1-r_2}},
\end{eqnarray*}
where we have used Lemma~\ref{lemg-h}(i), and
\begin{eqnarray*}
&& \mathbf E \bigl[ g_h(B_{r_2}-B_s)g_h(B_{r_2}-B_u)
\bigr]
\\
&&\qquad = \mathbf E \bigl[ g_h(B_{r_2}-B_u)
(g \ast p_{u-s}) (B_{r_2}-B_u) \bigr]
\\
&&\qquad \leq \| g_h \ast p_{u-s}\|_\infty\int
_{\mathbb R} g_h(x)p_{r_2-u}(x) \,dx
\\
&&\qquad \leq c \|g_h\|_{L^2} \|p_{u-s}
\|_{L^2} \frac{h^{\beta
-1/2}}{\sqrt{r_2-u}}
\end{eqnarray*}
with the help of Lemma~\ref{lemg-h}(ii). Going back to (\ref
{proof-n-h-2}), the result easily follows.
\end{longlist}

\begin{longlist}[\textit{Step} 2.]
\item[\textit{Step} 2.] \textit{Limit of $N^{h,1}$.}
We will show the following property:
%
\begin{equation}
\label{eqcvgce-L2-Nh1} \int_0^t dr \int
_0^r du \bigl[g_h(B_r-B_u)
\bigr]^2\xxrightarrow {h\to0} c_{\beta} \int
_0^t dr L_r(B_r)\qquad
\mbox{in } L^2(\Omega),
\end{equation}
where $c_{\beta}$ is the constant defined at Lemma~\ref{lemg-h}.
To this aim, observe that according to the occupation density formula
we have
\[
\Delta_{h}:=\int_0^t dr \int
_0^r du \bigl[g_h(B_r-B_u)
\bigr]^2 -c_{\beta} \int_0^t dr
L_r(B_r) = \int_0^t
\biggl(\int_{\mathbb R} Z_r(x) \,dx \biggr) \,dr,
\]
where $Z$ is the process defined by
\[
Z_r(x) = g_h(B_r-x)^2 \bigl[
L_r(x)-L_r(B_r) \bigr].
\]
Next, we decompose $\Delta_{h}$ as $\Delta_{h}^{1}+\Delta_{h}^{2}$, where
\[
\Delta_{h}^{1} = \int_0^t
\biggl(\int_{|x-B_r| <h^{1/2}} Z_r(x) \,dx \biggr)\,dr
\]
and
\[
\Delta_{h}^{2} = \int_0^t
\biggl(\int_{|x-B_r| \ge h^{1/2}} Z_r(x) \,dx \biggr)\,dr.
\]
We now estimate those two terms separately.

The term $\Delta_{h}^{1}$ can be bounded as follows: owing to Lemma
\ref{lemg-h}(i), we have
\[
\Delta_{h}^{1} \le c \int_0^t
\sup_{|x-y|<h^{1/2}} \bigl|L_r(x)-L_r(y)\bigr| \,dr.
\]
Owing to Lemma~\ref{lemloc-time-BM}, we thus get
\[
\mathbf E \bigl[\bigl| \Delta_{h}^{1} \bigr|^{2} \bigr]
\le c \sup_{t\in
[0,1]} \mathbf E \Bigl[ \sup_{|x-y|<h^{1/2}}
\bigl| L_t(x)-L_t(y) \bigr|^{2} \Bigr] \le c
h^{\varepsilon}
\]
for some constant $\varepsilon\in(0,1)$.

As far as $\Delta_{h}^{2}$ is concerned, invoke Lemma~\ref{lemg-h}(iv) in order to conclude that for any $\varepsilon>0$ such that
$\beta>
\frac{1}2+\varepsilon$ and every $h\leq1/4$, we have
\begin{eqnarray*}
\mathbf E \bigl[\bigl| \Delta_{h}^{2} \bigr|^{2} \bigr]
&\le& c h^\varepsilon \int_{0}^{t} \mathbf E
\biggl[ \biggl(\int_{|x-B_r| \ge h^{1/2}} \bigl| g_h^{\beta-\varepsilon
}(x-B_r)\bigr|^2
\bigl\llvert L_r(x)-L_r(B_r) \bigr\rrvert \,dx
\biggr)^{2} \biggr] \,dr
\\
&\le& c h^\varepsilon\int_{0}^{t} \mathbf E
\biggl[\int_{\mathbb R} \bigl| g_h^{\beta-\varepsilon
}(x-B_r)\bigr|^2
\bigl\llvert L_r(x)-L_r(B_r) \bigr\rrvert
^{2} \,dx \biggr] \,dr \le c h^\varepsilon,
\end{eqnarray*}
where we have appealed to Lemma~\ref{lemloc-time-BM} for the last inequality.
\end{longlist}
\begin{longlist}[\textit{Step} 3.]
\item[\textit{Step} 3.] \textit{Conclusion.}
Putting together the bounds on $\Delta_{h}^{1}$ and $\Delta_{h}^{2}$,
we have proved our assertion~(\ref{eqcvgce-L2-Nh1}), which easily yields
\[
L^{2}(\Omega)-\lim_{h\to0} \frac{\langle M^h \rangle_t}{h^{5-2\beta}} =
c_\beta \int_0^t
L_r(B_r) \,dr.
\]
In order to prove~(\ref{eqL2-cvgce-bracket-Mh}), we now just have to
observe that
\[
\int_{0}^{t} L_r(B_r)
\,dr = \int_{0}^{t} \biggl(\int
_{0}^{r} \delta _{B_r}(B_u)
\,du \biggr) \,dr = \int_{0}^{t} \biggl(\int
_{0}^{r} \delta_{0}(B_r-B_u)
\,du \biggr) \,dr = \alpha_t.
\]
This completes our proof.\quad\qed
\end{longlist}\noqed
\end{pf}

\section{$L^2$ modulus of one-dimensional local time on chaoses}\label{sec1-dim}

In this section, we go back to the study of the $L^2$ modulus of the
Brownian local time, that is, to the study of the quantity
$H_{t}^{h}(B)$ defined by~(\ref{eqhamiltonian-mod-cty-loc-time}) with
the global aim of proving Theorem~\ref{thmcvgce-L2-modulus-chaos}.
Before we go on with the proof, let us introduce some additional notation.

%
\begin{notation}\label{notdef-simplex-Phi-h}
For any $t>0$ and $n\ge1$, we write $\mathcal S_t^{n}$ for the simplex of
order $n$ on $[0,t]$, that is, $\mathcal S_t^n=\{(t_1,\ldots,t_n)\in
[0,t]^n\dvtx
t_1<\cdots<t_n\}$. For every $n\geq2$ and every $h>0$, we also define
a function $\Phi_h(t_1,t_2)$ as
\[
\Phi_h(t_1,t_2)=\Phi_{h,n}(t_1,t_2):=
\int_0^h p_{t_2-t_1}^{(n-2)}(y)
(h-y) \,dy, \qquad0\leq t_1\leq t_2\leq t.
\]
\end{notation}

From the classical uniform estimate $\sup_{y\in\mathbb
R}|p_t^{(2m)}(y)| \leq
c_m t^{-m-(1/2)}$, we can already derive the following bounds on
$\Phi
_{h,2m}$, which will be used in the course of our reasoning.

%
\begin{lemma}
Fix $m\geq1$. Then there exists a constant $c_m$ such that for every
$h\in(0,1)$ and all $0\leq t_1 < s <t <t_2$, one has
%
\begin{equation}
\label{bound-Phi-1} \bigl|\Phi_{h,2m}(s,t)\bigr|\leq c_m h^2
\llvert t-s\rrvert ^{-m+(1/2)}
\end{equation}
and for any $\lambda\in(0,1)$,
%
\begin{eqnarray}
\label{bound-Phi-2} \bigl|\Phi_{h,2m}(t_1,t)-\Phi_{h,2m}(t_1,s)\bigr|
&\leq& c_m h^2 \llvert t-s \rrvert ^\lambda
\llvert s-t_1\rrvert ^{-m+(1/2)-\lambda},
\\
\label{bound-Phi-3} \bigl|\Phi_{h,2m}(t,t_2)-\Phi_{h,2m}(s,t_2)\bigr|
&\leq& c_m h^2 \llvert t-s \rrvert ^\lambda
\llvert t_2-t\rrvert ^{-m+(1/2)-\lambda}.
\end{eqnarray}
\end{lemma}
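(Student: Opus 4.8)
The plan is to treat all three estimates as consequences of a single basic ingredient, namely the uniform heat-kernel derivative bound $\sup_{y\in\R}|p_t^{(2m)}(y)|\leq c_m t^{-m-1/2}$, combined with elementary control of the weight $(h-y)$ on the interval $y\in[0,h]$. For \eqref{bound-Phi-1}, I would simply estimate directly:
\begin{equation*}
|\Phi_{h,2m}(s,t)|=\Bigl|\int_0^h p_{t-s}^{(2m-2)}(y)(h-y)\,dy\Bigr|
\leq \sup_{y\in\R}|p_{t-s}^{(2m-2)}(y)|\int_0^h (h-y)\,dy
= \frac{h^2}{2}\,\sup_{y\in\R}|p_{t-s}^{(2m-2)}(y)|,
\end{equation*}
and then apply the uniform bound with $2m-2=2(m-1)$ derivatives, which gives $\sup_y|p_{t-s}^{(2m-2)}(y)|\leq c_m|t-s|^{-(m-1)-1/2}=c_m|t-s|^{-m+1/2}$. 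This yields \eqref{bound-Phi-1} with no further work.

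For the increment bounds \eqref{bound-Phi-2} and \eqref{bound-Phi-3}, the idea is to write the difference of the two $\Phi$'s as an integral of the difference of heat kernels, and then interpolate. Concretely, for \eqref{bound-Phi-2} one has
\begin{equation*}
\Phi_{h,2m}(t_1,t)-\Phi_{h,2m}(t_1,s)=\int_0^h\bigl[p_{t-t_1}^{(2m-2)}(y)-p_{s-t_1}^{(2m-2)}(y)\bigr](h-y)\,dy,
\end{equation*}
so the task reduces to bounding $\|p_{t-t_1}^{(2m-2)}-p_{s-t_1}^{(2m-2)}\|_\infty$. Writing this as $\int_s^t \partial_u p_{u-t_1}^{(2m-2)}(y)\,du=\tfrac12\int_s^t p_{u-t_1}^{(2m)}(y)\,du$ via the heat equation $\partial_u p_u=\tfrac12 p_u''$, and using the uniform bound on $2m$ derivatives, gives $\|p_{t-t_1}^{(2m-2)}-p_{s-t_1}^{(2m-2)}\|_\infty\leq c_m\int_s^t (u-t_1)^{-m-1/2}\,du\leq c_m|t-s|\,|s-t_1|^{-m-1/2}$, since $u-t_1\geq s-t_1$ on $[s,t]$. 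Then I would interpolate between this bound and the crude bound $\|p_{t-t_1}^{(2m-2)}-p_{s-t_1}^{(2m-2)}\|_\infty\leq c_m|s-t_1|^{-m+1/2}$ (from \eqref{bound-Phi-1}-type estimates, using $u-t_1\geq s-t_1$): raising the first to the power $\la$ and the second to $1-\la$ produces $c_m|t-s|^\la|s-t_1|^{-m+1/2-\la}$, and multiplying by $\int_0^h(h-y)\,dy=h^2/2$ gives \eqref{bound-Phi-2}. The bound \eqref{bound-Phi-3} is entirely symmetric, treating $t_2$ as the fixed right endpoint and using $u-t\geq u-t\ge \cdot$, i.e.\ on the relevant interval the gap to $t_2$ is at least $t_2-t$, so the same interpolation argument with the roles of the endpoints swapped applies verbatim.

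The only mild subtlety — and the closest thing to an obstacle — is making sure the interpolation exponents line up so that the power of $|t-s|$ is exactly $\la$ and the remaining singular exponent is $-m+1/2-\la$; this forces the choice of weights $\la$ and $1-\la$ in the geometric interpolation $AB$ versus $A^\la B^{1-\la}$ with $A=|t-s|\,|s-t_1|^{-m-1/2}$ and $B=|s-t_1|^{-m+1/2}$, and one checks $A^\la B^{1-\la}=|t-s|^\la|s-t_1|^{-\la(m+1/2)-(1-\la)(m-1/2)}=|t-s|^\la|s-t_1|^{-m+1/2-\la}$, as required. Everything else is a routine application of the stated uniform heat-kernel estimate and Fubini, so I would present the argument compactly, doing \eqref{bound-Phi-1} in one line and then \eqref{bound-Phi-2}–\eqref{bound-Phi-3} together with the interpolation step spelled out once.
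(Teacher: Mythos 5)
Your proof is correct and follows exactly the route the paper intends: the lemma is stated there without proof, as an immediate consequence of the uniform estimate $\sup_{y\in\R}|p_t^{(2m)}(y)|\leq c_m t^{-m-\frac12}$, which is precisely your only ingredient. The geometric interpolation between the bound $c_m|t-s|\,|s-t_1|^{-m-\frac12}$ (from the heat equation) and the crude bound $c_m|s-t_1|^{-m+\frac12}$ is the right way to produce the exponent $\la$, and your exponent bookkeeping checks out.
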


The proof of Theorem~\ref{thmcvgce-L2-modulus-chaos} is decomposed in
four main steps: after some preliminary material, we write an explicit
chaos decomposition for each $H_t^{h}(B)$. Then we study the asymptotic
behavior of the variance in each chaos, and the central limit theorem
for the finite-dimensional distributions of $J_n(H_.^{h}(B))$ is
obtained by analyzing the contractions of its sequence of kernels.
Finally, we study the tightness of the process $\{J_n(H_t^{h}(B));
t\in[0,1]\}$ properly normalized.

\subsection{Stochastic analysis preliminaries}\label{secprelim-stoch-analysis}
We will consider here the Brownian motion $B$ as an isonormal process
$B\equiv\{B(h); h\in\mathcal H\}$ defined on $(\Omega,\mathcal
F,\mathbf P)$, with $\mathcal H
=L^2([0,1])$. Recall that it means that $B$ is a centered Gaussian family
with covariance function $\mathbf E[B(h_1) B(h_2)]=\langle h_1,
h_2\rangle
_{\mathcal H}$. We also assume that $\mathcal F$ is generated by $B$.

At this point, we can introduce the Malliavin derivative operator on
the Wiener space $(\Omega,\mathcal H,\mathbf P)$. Namely, we first let
$\mathcal S$ be the
family of smooth functionals $F$ of the form
\[
F=f \bigl(B(h_1),\ldots,B(h_n) \bigr),
\]
where $h_1,\ldots,h_n\in\mathcal H$, $n\geq1$, and $f$ is a smooth function
having polynomial growth together with all its partial derivatives.
Then the Malliavin derivative of such a functional $F$ is the $\mathcal H
$-valued random variable defined by
\[
\mathcal DF= \sum_{i=1}^n
\frac{\partial f}{\partial x_i} \bigl(B(h_1),\ldots,B(h_n) \bigr)
h_i.
\]
For all $p>1$, it is known that the operator $\mathcal D$ is closable from
$L^p(\Omega)$ into $L^p(\Omega; \mathcal H)$.
We still denote by $\mathcal D$ the closure of this operator, whose
domain is
usually denoted by $\mathbb D^{1,p}$ and is defined
as the completion of $\mathcal S$ with respect to the norm
\[
\|F\|_{1,p}:= \bigl( \mathbf E \bigl[|F|^p \bigr]+
\mathbf E \bigl[\| \mathcal DF\|_\mathcal H^p \bigr]
\bigr)^{1/p}.
\]
We shall also denote by $\mathbb D^{\infty,p}$ the intersection $\bigcap_{k\ge
1}\mathbb D^{k,p}$.

Consider the $n$th Hermite polynomial $H_n$ defined
on $\mathbb R$, that is,
%
\begin{equation}
\label{eqdef-hermite-polynomial} H_n(x) = \frac{(-1)^{n} }{n!} e^{x^2/2}
\partial_{x}^{n} e^{-x^2/{2}}
\end{equation}
and let $\mathcal H_n$ be the closed linear subspace of $L^2(\Omega)$ generated
by the random variables $\{H_n(B(h)); h\in\mathcal H, \|h\|
_{\mathcal H}=1\}$.
Then $\mathcal H_n$ is called Wiener chaos of order~$n$, and
$L^2(\Omega)$ can
be decomposed into the orthogonal sum of the $\mathcal H_n$: we have
$L^2(\Omega,\mathcal F,\mathbf P) = \bigoplus_{n=0}^{\infty} \mathcal H_n$ (see
\cite{Nu-bk}, Theorem~1.1.1). In the sequel, we denote by $J_n(F)$ the projection of
a given random variable $F\in L^2(\Omega)$ onto $\mathcal H_n$ for
$n\ge0$,
with $J_0(F)=\mathbf E[F]$. In this context, Stroock's formula (see
\cite
{St}) states that, whenever $F\in\mathbb D^{\infty,2}$, one can compute
$J_n(F)$ explicitly as follows for $n\ge1$:
%
\begin{equation}
\label{eqstroock-formula} J_n(F) = I_n(f_n)\qquad
\mbox{with } f_n(t_{1},\ldots,t_{n}) =
\frac{\mathbf E [\mathcal
D_{t_1,\ldots,t_n} F  ]}{n!},
\end{equation}
where $I_n(f_n)$ stands for the multiple It\^o--Wiener integral of
$f_n$ with respect to $B$. We also label the value of $H_{2m}(0)$ here
for further use: for $m\ge1$, we have
%
\begin{equation}
\label{eqvalue-H-2k} H_{2m}(0) = \frac{(-1)^{m}}{2^{m} m!}.
\end{equation}

Let now $f_n$ be a symmetric function in $L^{2}([0,1]^{n})$. The
contraction of order $p$ of $f_n$ is the function defined on
$[0,1]^{2(n-p)}$ as follows:
\begin{eqnarray}
\label{eqdef-contraction}
&& [f_n\otimes_{p}
f_n ](t_{1},\ldots,t_{2(n-p)})\nonumber
\\
&&\qquad = \int_{[0,1]^{p}} f_{n}(u_1,
\ldots,u_p,t_1,\ldots,t_{n-p})
\\
&&\hspace*{59pt}{}\times f_{n}(u_1,\ldots,u_p,t_{n-p+1},
\ldots,t_{2(n-p)}) \,du_1\cdots \,du_p.\nonumber
\end{eqnarray}
With this definition in hand, let us state the following theorem
(borrowed from~\cite{NP}), which will be crucial in order to establish
the convergence of our renormalized local times.

%
\begin{proposition}\label{propfourth-moment}
Let $\{F_{h}= I_{n}(f_{n,h}); h>0\}$ be a family of random
variables belonging to a fixed Wiener chaos $\mathcal H_n$, for which we
assume that the kernels $f_{n,h}$ are symmetric. We also suppose that:
\begin{longlist}[(ii)]
\item[(i)] We have $\lim_{h\to0}\mathbf E[F_h^2]=\sigma^{2}>0$.

\item[(ii)] For all $p\in\{1,\ldots,n-1\}$, the relation $\lim_{h\to
0}\|f_{n,h}\otimes_{p} f_{n,h}\|_{\mathcal H^{\otimes2(n-p)}}=0$ holds true.

Then $F_h$ converges in law to a Gaussian random variable $\mathcal
N(0,\sigma
^2)$ as $h\to0$.
\end{longlist}
\end{proposition}

In order to obtain convergence in law for processes, we shall also
invoke a CLT for multidimensional vectors in a fixed chaos, originally
proved in \cite{peccati-tudor}:

%
\begin{proposition}\label{propfourth-moment-d-dim}
Consider a family of $d$-dimensional random variables $\{F_{h};
h>0\}$ with $F_{h}= (F_{h}^{1},\ldots,F_{h}^{d})$, such that
$F_{h}^{j}$ belongs to a fixed Wiener chaos $\mathcal H_n$ for each
$j\in\{
1,\ldots,d\}$ and $h>0$. Suppose furthermore that for a symmetric
matrix $\Gamma$ we have:
\begin{longlist}[(ii)]
\item[(i)] Each $F_{h}^{j}$ converges in law to a Gaussian random
variable $\mathcal N(0,\Gamma(i,i))$ as $h\to0$.

\item[(ii)] For each $(i,j)\in\{1,\ldots,d\}^{2}$, one has $\lim_{h\to
0}\mathbf E[F_{h}^{i} F_{h}^{j}]=\Gamma(i,j)$.

Then $F_h$ converges in law to a Gaussian random variable $\mathcal
N(0,\Gamma)$
as \mbox{$h\to0$}.
\end{longlist}
\end{proposition}

\subsection{Chaos decomposition of $H_t^{h}(B)$}
In order to compute the chaos decomposition of $H_t^{h}(B)$, we first
recall a relation taken from \cite{HN09}, whose proof is similar to our
identity~(\ref{eqH-gamma-f-beta}): we have
%
\begin{eqnarray}\label{eqrel-Ht-B-delta-0}
H_t^{h}(B)
&=& \int_{[0,t]^{2}}
\bigl[\delta_0(B_v-B_u+h)
\nonumber\\[-8pt]\\[-8pt]
&&\hspace*{26pt}{} +
\delta_0(B_v-B_u-h)-2
\delta_0(B_v-B_u) \bigr] \,du \,dv,\nonumber
\end{eqnarray}
where $\delta_0(B_v-B_u+h)$ has to be understood as a distribution on the
Wiener space in the sense of Watanabe (see \cite{Wa}). One can also
show that the right-hand side of~(\ref{eqrel-Ht-B-delta-0}) is the
$L^2(\Omega)$-limit of the sequence obtained by replacing $\delta_0$ with
the Gaussian approximating kernel $p_\varepsilon$ (see \cite{HN09}, Section~2,
for further details).

Let us also give an elementary yet useful lemma.

%
\begin{lemma}\label{lemexpected-pn-Gaussian}
Let $p_t$ be the Gaussian kernel defined by (\ref
{eqdef-gauss-kernel}), and $N$ be a real valued random variable such
that $N\sim\mathcal N(h,\sigma^2)$ with $h\in\mathbb R$ and $\sigma
^2>0$. Then for all
$n\ge0$, we have
%
\begin{equation}
\label{eqexpect-ptn-N} \mathbf E \bigl[p_t^{(n)}(N) \bigr]=
p_{t+\sigma^2}^{(n)}(h).
\end{equation}
\end{lemma}

\begin{pf}
Taking into account the analytic form of expected values with respect
to $N$, we have $\mathbf E[ p_t^{(n)}(N) ]=[p_t^{(n)}*p_{\sigma^2}](h)$.
Furthermore, elementary relations for convolutions and the semigroup
property for $p$ yield:
\[
p_t^{(n)}*p_{\sigma^2} = [p_t*p_{\sigma^2}
]^{(n)} = p_{t+\sigma^2}^{(n)}
\]
from which relation~(\ref{eqexpect-ptn-N}) is easily deduced.
\end{pf}

Recall now that the projection $J_n(F)$ of a $L^2$ random variable $F$
onto a fixed chaos $\mathcal H_n$ has been defined at Section~\ref
{secprelim-stoch-analysis}. For our Hamiltonian $H_t^{h}(B)$, we get
the following.

%
\begin{proposition}\label{propproj-1-d}
For every $n\geq1$ and every $h>0$, recall that we have set
$X^{n,h}_{t}= J_{n}(H^{h}_{t}(B))$ for the projection of $H_t^{h}(B)$
onto the $n$th Wiener chaos. Then we have
%
\begin{eqnarray}\label{eqexp-Jn-HtB}
X^{n,h}_{t}&=&0\qquad\mbox{if $n$ is odd},
\nonumber\\[-8pt]\\[-8pt]
X^{n,h}_{t}&=&\frac{16}{n!} I_n
\bigl((f_h+g_{h,t})\cdot\mathbf {1}_{[0,t]^n} \bigr)
\qquad\mbox{if $n$ is even},\nonumber
\end{eqnarray}
where $f_h\in L^2(\mathbb R_+^n)$, $g_{h,t}\in L^2([0,t]^n)$ are the symmetric
functions defined by
%
\begin{eqnarray}
\label{definf-h} f_h(t_1,\ldots,t_n)&:=&
\Phi_h \bigl(\min(t_1,\ldots,t_n),
\max(t_1,\ldots,t_n) \bigr),
\\
g_{h,t}(t_1,\ldots,t_n)&:=&-
\Phi_h \bigl(\min(t_1,\ldots,t_n),t \bigr)+
\Phi _h(0,t)
\nonumber\\[-8pt]\label{defing-h} \\[-8pt]
&&{} -\Phi_h \bigl(0,\max(t_1,
\ldots,t_n) \bigr)\nonumber
\end{eqnarray}
and where we recall that the function $\Phi_h$ has been defined at
Notation~\ref{notdef-simplex-Phi-h}.
\end{proposition}

\begin{pf}
We divide this proof in two steps:
\begin{longlist}
\item[\textit{Step} 1.]
\textit{Computation of the projection.}
Let us first compute the chaos decomposition of $\delta_0(B_v-B_u+h)$.
To this aim, recall that, as a distribution on the Wiener space (see
\cite{Wa}), we have $\delta_0(B_v-B_u+h)=\lim_{\varepsilon\to0}
p_{\varepsilon
}(B_v-B_u+h)$ for the Gaussian kernel $p_{\varepsilon}$ defined at
(\ref{eqdef-gauss-kernel}). Furthermore, according to Stroock's formula~(\ref{eqstroock-formula}), we have $J_{n}(p_{\varepsilon
}(B_v-B_u+h))=I_n(\varphi
_n^{\varepsilon})$ with
\begin{eqnarray*}
\varphi_n^{\varepsilon}(t_1,\ldots,t_n) &=&
\frac{1}{n!} \mathbf E \bigl[\mathcal D_{t_1,\ldots,t_n} p_{\varepsilon}(B_v-B_u+h)
\bigr]
\\
& =& \frac{1}{n!} \mathbf E \bigl[p_{\varepsilon}^{(n)}(B_v-B_u+h)
\bigr]\prod_{i=1}^{n} \mathbf{1}
_{[u,v]}(t_i). 
\end{eqnarray*}
We now compute $\mathbf E[ p_{\varepsilon}^{(n)}(B_v-B_u+h) ]$ by
means of formula
(\ref{eqexpect-ptn-N}), which yields
\[
\varphi_n^{\varepsilon}(t_1,\ldots,t_n) =
\frac{p_{v-u+\varepsilon}^{(n)}(h)}{n!} \prod_{i=1}^{n} \mathbf
{1}_{[u,v]}(t_i).
\]
Taking limits as $\varepsilon\to0$, we end up with $J_{n}(\delta
_0(B_v-B_u+h))=I_n(\varphi_n)$, where
\[
\varphi_n(t_1,\ldots,t_n) =
\frac{p_{v-u}^{(n)}(h)}{n!} \prod_{i=1}^{n}
\mathbf{1}_{[u,v]}(t_i).
\]
The same kind of computations is valid for $\delta_0(B_v-B_u-h)$ and
$\delta
_0(B_v-B_u)$, and thus going back to~(\ref{eqrel-Ht-B-delta-0}), we
have obtained
\begin{eqnarray*}
X^{n,h}_{t} &=& J_n \bigl(H_t^{h}(B)
\bigr)
\\
&=& \frac{2}{n!} I_n \Biggl( \int_{\mathcal S_{t}^{2}}
\prod_{i=1}^{n} \mathbf{1}_{[u,v]}(t_i)
\bigl[p_{v-u}^{(n)}(h) + p_{v-u}^{(n)}(-h)
- 2 p_{v-u}^{(n)}(0) \bigr] \,du \,dv \Biggr),
\end{eqnarray*}
where we recall that $\mathcal S_t^2$ stands for the simplex of order
$2$ on
$[0,t]$ (see Notation~\ref{notdef-simplex-Phi-h}). Moreover, observe
that $p_{v-u}^{(n)}(h) + p_{v-u}^{(n)}(-h) - 2 p_{v-u}^{(n)}(0)\equiv
0$ when $n$ is odd, which yields the first claim in (\ref
{eqexp-Jn-HtB}). Therefore, only even $n$s are considered from now on.
\end{longlist}
\begin{longlist}
\item[\textit{Step} 2.]
\textit{Simplification of the expression for the projection.}
Notice first that, since we are dealing with a linear Brownian motion
$B$, one can write $X^{n,h}_{t}$ as
%
\begin{eqnarray}
\label{eqJn-HtB-2} X^{n,h}_{t} &=& 2 \int_{\mathcal S_{t}^{n}}
\Biggl(\int_{\mathcal S_{t}^{2}} \prod_{i=1}^{n}
\mathbf{1}_{[u,v]}(t_i) \bigl[p_{v-u}^{(n)}(h)+ p_{v-u}^{(n)}(-h)\nonumber
\\
&&\hspace*{144pt}{}  - 2 p_{v-u}^{(n)}(0)
\bigr] \,du \,dv \Biggr)\,dB_{t_1} \cdots \,dB_{t_n}
\\
&=& 2 \int_{\mathcal S_{t}^{n}} \biggl(\int_{0}^{t_1}\!\!
\int_{t_n}^{t} \bigl[p_{v-u}^{(n)}(h)
+ p_{v-u}^{(n)}(-h) - 2 p_{v-u}^{(n)}(0)
\bigr] \,dv \,du \biggr)\,dB_{t_1} \cdots \,dB_{t_n}.\nonumber\hspace*{-10pt}
\end{eqnarray}
Let us transform now the expression $p_{v-u}^{(n)}(h) +
p_{v-u}^{(n)}(-h) - 2 p_{v-u}^{(n)}(0)$. First, since $n$ is an even
number and $p$ is symmetric, we have
\[
p_{v-u}^{(n)}(h) + p_{v-u}^{(n)}(-h) - 2
p_{v-u}^{(n)}(0) = 2 \bigl[p_{v-u}^{(n)}(h)
- p_{v-u}^{(n)}(0) \bigr].
\]
Then write
\begin{eqnarray*}
p_{v-u}^{(n)}(h) - p_{v-u}^{(n)}(0) &=&
\int_{0}^{h} p_{v-u}^{(n+1)}(x)
\,dx = \int_{0}^{h}\!\! \int_{0}^{x}
p_{v-u}^{(n+2)}(y) \,dy \,dx
\\
&=& 2 \int_{0}^{h}\!\!
\int_{0}^{x} \partial_{v}
p_{v-u}^{(n)}(y) \,dy \,dx,
\end{eqnarray*}
which yields
\begin{eqnarray*}
&& \int_{0}^{t_1} du \int_{t_n}^{t}
dv \bigl[p_{v-u}^{(n)}(h) - p_{v-u}^{(n)}(0)
\bigr]
\\
&&\qquad = 2 \int_{0}^{t_1} du \int
_{t_n}^{t} dv \int_{0}^{h}
dx \int_{0}^{x} dy \, \partial_{v}
p_{v-u}^{(n)}(y)
\\
&&\qquad = 2 \int_{0}^{t_1} du \int_{0}^{h}
dx \int_{0}^{x} dy \bigl[ p_{t-u}^{(n)}(y)
- p_{t_n-u}^{(n)}(y) \bigr]
\\
&&\qquad= -4 \int_{0}^{t_1} du \int
_{0}^{h} dx \int_{0}^{x}
dy \bigl[\partial _{u} p_{t-u}^{(n-2)}(y) -
\partial_{u} p_{t_n-u}^{(n-2)}(y) \bigr]
\\
&&\qquad= -4 \int_{0}^{h} \bigl[ p_{t-t_1}^{(n-2)}(y)
- p_{t_n-t_1}^{(n-2)}(y) - p_{t}^{(n-2)}(y) +
p_{t_n}^{(n-2)}(y) \bigr](h-y) \,dy.
\end{eqnarray*}
Plugging this expression into~(\ref{eqJn-HtB-2}) and symmetrizing
again, relation~(\ref{eqexp-Jn-HtB}) easily follows.\quad\qed
\end{longlist}\noqed
\end{pf}

\subsection{Asymptotic behavior of the variance}\label{secrenorm}
In this section, we compute the correct amount of normalization needed
for the convergence of each $X^{2m,h}_{t}=J_{2m}(H_{t}(B))$ for $m\ge
1$. This will be obtained thanks to an asymptotic analysis of the
variance of those random variables and recall that we have shown that
$X^{2m,h}_{t}=\frac{16}{(2m)!} I_{2m} ((f_h+g_{h,t})\cdot\mathbf{1}
_{[0,t]^{2m}} )$, which means in particular that
%
\begin{eqnarray}\label{isom-int}
&& \mathbf E \bigl[ X^{2m,h}_t
X^{2m,h}_s \bigr]
\nonumber\\[-8pt]\\[-8pt]
&&\qquad =\frac{16^2}{(2m)!} \bigl\langle
(f_h+g_{h,t})\cdot\mathbf{1}_{[0,t]^{2m}},(f_h+g_{h,s})
\cdot\mathbf {1}_{[0,s]^{2m}} \bigr\rangle_{L^2(\mathbb R_+^{2m})}.\nonumber
\end{eqnarray}
Our aim is to prove the following.

%
\begin{proposition}\label{proprenorm}
Fix $m \geq1$. Then for all $0\leq s\leq t\leq1$, it holds that
%
\begin{equation}
\label{eqrenorm-var-Jn-HtB} \lim_{h\to0}\frac{\mathbf E [ X^{2m,h}_t X^{2m,h}_s  ]}{h^4
\ln(1/ h)} =
\sigma_m^2 s \qquad\mbox{with } \sigma_m^2=
\frac{c
(2m-2)!}{2^{2m} [(m-1)!]^2}
\end{equation}
for some strictly positive universal constant $c$.
\end{proposition}

The strategy for the proof of Proposition~\ref{proprenorm} is rather
simple. Namely, with the expression~(\ref{isom-int}) in mind, our
calculations will be decomposed into the following facts:
\begin{itemize}
\item
The norm $\|g_{h,t}\|_{L^2([0,t]^{2m})}^2$ is of order at most $h^4$ as
$h$ tends to $0$, and thus is negligible with respect to $h^4 \ln(1/h)$.
\item
The quantity $\langle f_h \cdot\mathbf{1}_{[0,t]^{2m}},f_h\cdot
\mathbf{1}
_{[0,s]^{2m}} \rangle_{L^2(\mathbb R_+^{2m})}$ scales as in relation
(\ref{eqrenorm-var-Jn-HtB}).
\end{itemize}
Let us thus start by identifying the negligible terms.

%
\begin{lemma}\label{lemmag-h}
Fix $m\geq1$, and recall that for every $t> 0$, $g_{h,t}=g_{h,t,2m}$
is defined by~(\ref{defing-h}). Then there exists a constant $c_m$
such that for every $h>0$,
\[
\sup_{t\in[0,1]}\|g_{h,t}\|_{L^2([0,t]^{2m})}^2
\leq c_m h^4.
\]
\end{lemma}

\begin{pf}
Write
\begin{eqnarray*}
&& \|g_{h,t}\|_{L^2([0,t]^{2m})}^2
\\
&&\qquad =(2m)! \int_{\mathcal S_t^{2m}} \bigl[ \Phi_h(t_1,t)-
\Phi _h(0,t)+\Phi _h(0,t_{2m})
\bigr]^2 \,dt_1 \cdots \,dt_{2m}
\\
&&\qquad \leq c_m \biggl\{\int_{[0,t]}
(t-t_1)^{2m-1} \Phi_h(t_1,t)^2
\,dt_1
\\
&&\hspace*{49pt}{} +t^{2m} \Phi_h(0,t)^2+\int
_{[0,t]} t_{2m}^{2m-1} \Phi_h(0,t_{2m})^2
\,dt_{2m} \biggr\}
\end{eqnarray*}
and the bound is then easily derived from~(\ref{bound-Phi-1}).
\end{pf}

We can now turn to the proof of the main proposition of this section.

\begin{pf*}{Proof of Proposition~\ref{proprenorm}}
Thanks to Lemma~\ref{lemmag-h}, we only have to focus on
\[
A_h(s,t)\equiv\langle f_h\cdot\mathbf{1}_{[0,t]^{2m}},f_h
\cdot \mathbf{1} _{[0,s]^{2m}} \rangle_{L^2(\mathbb R_+^{2m})}.
\]
An easy integration over the simplex gives
\begin{eqnarray*}
A_h(s,t)&=& (2m)! \int_{\mathcal S_{s}^{2m}} \bigl[
\Phi_{h,2m}(t_1,t_{2m}) \bigr]^2
\,dt_1 \cdots \,dt_{2m}
\\
&=& (2m)! \int_{\mathcal S_{s}^{2}} \frac{(t_{2m}-t_1)^{{2m}-2}}{({2m}-2)!} \bigl[
\Phi_{h,2m}(t_1,t_{2m}) \bigr]^2
\,dt_1 \,dt_{2m}.
\end{eqnarray*}
Then, using the classical formula for the $2m$th derivative of $p_t$,
that is,
%
\begin{equation}
\label{formulderiv} p_t^{(2m)}(y) = (2m)! t^{-m}
p_t(y) H_{2m} \biggl(\frac{y}{t^{1/2}} \biggr),
\end{equation}
where $H_{2m}$ is defined by~(\ref{eqdef-hermite-polynomial}), we
deduce that
\begin{eqnarray*}
A_h(s,t) &=& (2m)! ({2m}-2)!
\\
&&{}\times \int_{\mathcal S_{s}^{2}}
(t_{2}-t_1)^{{2m}-2}
\biggl[\int_{0}^{h}
(t_{2}-t_1)^{-(m-1)} \frac
{e^{-{y^2}/({2(t_{2}-t_1)})}}{(2\pi(t_{2}-t_1))^{1/2}}
\\
&&\hspace*{105pt}{}\times
H_{{2m}-2} \biggl( \frac{y}{ (t_{2}-t_1)^{1/2}} \biggr) (h-y) \,dy
\biggr]^2 \,dt_1 \,dt_{2}
\\
& =& (2m)! (2m-2)!
\\
&&{}\times  \int_{\mathcal S_{s}^{2}}\biggl[\int
_{0}^{h} \frac{e^{-y^2/({2(t_{2}-t_1)})}}{(2\pi
(t_{2}-t_1))^{1/2}} H_{{2m}-2}
\biggl(\frac{y}{
(t_{2}-t_1)^{1/2}} \biggr) (h-y) \,dy \biggr]^2 \,dt_1
\,dt_{2}.
\end{eqnarray*}
Perform the change of variable $t_{2}-t_1=\tau$ and $t_1=\sigma$,
which yields
\begin{eqnarray*}
A_h(s,t) &=& (2m)! (2m-2)!
\\
&&{}\times  \int_{0}^{s}
(s-\tau) \biggl[\int_{0}^{h} \frac{e^{-y^2/(2\tau)}}{(2\pi\tau)^{1/2}}
H_{{2m}-2} \biggl(\frac{y}{ \tau^{1/2}} \biggr) (h-y) \,dy
\biggr]^2 \,d\tau.
\end{eqnarray*}
Now set $y/\tau^{1/2}=z$ in order to get
\begin{eqnarray*}
A_h(s,t)&=& \frac{(2m)! (2m-2)!} {2\pi} h^2
\\
&&{}\times  \int
_{0}^{s} (s-\tau) \biggl[\int
_{0}^{h/\tau^{1/2}} e^{-z^2/2} H_{{2m}-2} (z
) \biggl(1-\frac{\tau^{1/2} z}{h} \biggr) \,dz \biggr]^2 \,d\tau.
\end{eqnarray*}
Finally, let $u=h/\tau^{1/2}$, so that we end up with $A_h(s,t)= \frac{1} {\pi}(2m)! (2m-2)! h^4 a(h)$, where
\[
a(h)\equiv \int_{h/s^{1/2}}^{\infty} u^{-3}
\biggl(s-\frac{h^2}{u^2} \biggr) \biggl[\int_{0}^{u}
e^{-z^2/2} H_{{2m}-2} (z ) \biggl(1-\frac
{z}{u} \biggr) \,dz
\biggr]^2 \,du.
\]
It is now readily checked that the main singularity in the integral
defining $a(h)$ is due to a term $u^{-3}u^{2}=u^{-1}$ integrated close
to 0, so that for small $h$, $a(h)$ is of order $\ln(1/h)$.

In order to quantify this fact, let us apply l'Hopital's rule to
$a(h)/\ln(1/h)$. We get
\[
\lim_{h\to0} \frac{a(h)}{\ln(1/h)} = \lim_{h\to0}
\frac
{b(h)}{h^{-2}}
\]
with
\[
b(h)= 2 \int_{h/s^{1/2}}^{\infty}
u^{-5} \biggl[\int_{0}^{u}
e^{-z^2/2} H_{{2m}-2} (z ) \biggl(1-\frac
{z}{u} \biggr) \,dz
\biggr]^2 \,du.
\]
It is now easily seen that $b'(h)$ is equivalent to $-\frac{s}{2}
h^{-3} [H_{2m-2}(0)]^{2}$ in a neighborhood of the origin, so that a
second application of l'Hopital's rule to $b(h)/h^{-2}$ yields
\[
\lim_{h\to0} \frac{a(h)}{\ln(1/h)} = \frac{s}{4}
\bigl[H_{2m-2}(0) \bigr]^{2}.
\]
In order to conclude recall that $A_h(s,t)= \frac{1}{\pi} (2m)! (2m-2)!
h^4 a(h)$, and thus with the value of $H_{2m-2}(0)$ in mind [see
(\ref{eqvalue-H-2k})] we end up with
\[
\lim_{h\to0} \frac{A_h(s,t)}{h^4\ln(1/h)} = \frac{s}{4} \bigl[
H_{2m-2}(0) \bigr]^2 (2m)! (2m-2)! = \frac{(2m)!(2m-2)!}{2^{2m-1}[(m-1)!]^2} s,
\]
which completes the proof of relation~(\ref{eqrenorm-var-Jn-HtB}) since
\[
\lim_{h\to0}\frac{\mathbf E [ X^{2m,h}_t X^{2m,h}_s  ]}{h^4
\ln(1/
h)}=\frac{128}{\pi(2m)!}\lim
_{h\to0} \frac{A_h(s,t)}{h^4\ln(1/h)}.
\]\upqed
\end{pf*}

%
\begin{remark}
The fact that $\sum\sigma_m^2=\infty$, mentioned at Theorem~\ref{thmcvgce-L2-modulus-chaos}(iii), follows at once from relation
(\ref{eqrenorm-var-Jn-HtB}). Indeed, using Stirling's formula, we can
easily conclude that $\sigma_m^2$ is asymptotically equivalent to
$\frac
{c}{\sqrt{m}}$ for some constant $c>0$.
\end{remark}

\subsection{Contractions}\label{seccontractions-1-d}
In this section, we shall prove that for a fixed $t\in[0,1]$ the random
variable $H_{t}(B)/[h^2\ln(1/h)^{1/2}]$ converges in law to a Gaussian
random variable as $h$ goes to 0. Owing to Proposition~\ref{propfourth-moment} and with Proposition~\ref{proprenorm} in hand,
this boils down to the study of contractions for the functions
$f_h,g_h$ involved in the definition of $J_n(H_{t}(B))$ given at (\ref
{eqexp-Jn-HtB}). Those contractions are evaluated in the following proposition.

%
\begin{proposition}\label{propcontraction-f-h-plus-g-h}
Fix $n=2m \geq2$, and recall that $f_h,g_{h,t}$ also depend on $n$ as
highlighted in~(\ref{definf-h})--(\ref{defing-h}). Then for every
$r\in\{1,\ldots,n-1\}$, one has
%
\begin{equation}
\label{eqcontraction-f-h-plus-g-h} \frac{1}{h^8 \ln^2(1/h)} \bigl\|(f_h+g_{h,t})
\otimes_r (f_h+g_{h,t})\bigr\|
_{L^2([0,t]^{2n-2r})}^2 \to0
\end{equation}
as $h$ tends to $0$.
\end{proposition}

\begin{pf}
Due to Lemma~\ref{lemmag-h}, the proof of Proposition~\ref{proprenorm} and thanks to the fact that
\[
\|f_h \otimes_r g_{h,t}
\|_{L^2([0,t]^{2n-2r})} \leq\|f_h\| _{L^2([0,t]^{2m})} \|g_{h,t}
\|_{L^2([0,t]^{2m})},
\]
it is readily checked that as $h$ tends to $0$,
\begin{eqnarray*}
&& \frac{1}{h^8 \ln^2(1/h)} \bigl\|(f_h+g_{h,t}) \otimes
_r (f_h+g_{h,t})\bigr\|
_{L^2([0,t]^{2n-2r})}^2
\\
&&\qquad =\frac{1}{h^8 \ln^2(1/h)} \|f_h \otimes
_r f_h\| _{L^2([0,t]^{2n-2r})}^2+o(1).
\end{eqnarray*}
We are thus reduced to prove that
%
\begin{equation}
\label{eqcontraction-f-h} \lim_{h\to0}\frac{ \|f_h \otimes_r f_h\|_{L^2([0,t]^{2n-2r})}^2}{h^8
\ln^2(1/h)} = 0.
\end{equation}
In order to compute $\|f_h \otimes_r f_h\|_{L^2([0,t]^{2n-2r})}^2$,
let us consider the following general problem: fix an integrable
function $\varphi$ defined on $\mathcal S_{t}^2$
and compute the contraction norm:
\begin{eqnarray*}
R_{n,r}(\varphi) &=& \int_{[0,t]^{2(n-r)}} \biggl(\int
_{[0,t]^{r}} \varphi \bigl(\max \bigl(\mathbf{s},
\mathbf{t}^{1} \bigr),\min \bigl(\mathbf{s},\mathbf
{t}^{1} \bigr) \bigr)
\\
&&\hspace*{72pt} {}\times \varphi \bigl( \max \bigl(\mathbf{s},
\mathbf{t}^{2} \bigr),\min \bigl(\mathbf{s}, \mathbf{t}^{2}
\bigr) \bigr) \,d\mathbf{s} \biggr)^{2} \,d\mathbf {t}^{1} \,d
\mathbf{t}^{2},
\end{eqnarray*}
where we have set
\begin{eqnarray*}
\max(\mathbf{s},\mathbf{t})&=&\max(s_1,\ldots,s_r,t_1,
\ldots,t_{n-r}),
\\
\min(\mathbf{s},\mathbf{t})&=&
\min(s_1,\ldots,s_r,t_1,\ldots,t_{n-r}).
\end{eqnarray*}
Note that $R_{n,r}(\varphi)$ can also be written as
\[
R_{n,r}(\varphi) = \int_{[0,t]^{2(n-r)}} \int
_{[0,t]^{2r}} \prod_{i,j=1}^{2}
\varphi \bigl(\max \bigl(\mathbf{s}^{i},\mathbf {t}^{j}
\bigr),\min \bigl(\mathbf{s}^{i},\mathbf{t}^{j} \bigr)
\bigr) \,d\mathbf{s}^{1} \,d\mathbf{s}^{2} \,d
\mathbf{t}^{1} \,d\mathbf{t}^{2}.
\]
In order to evaluate this integral, the following simple
transformations can be performed: \textup{(i)} Replace $\max(\mathbf
{s}^{i},\mathbf{t}
^{j})$ by $\max(\mathbf{s}^{i})\vee\max(\mathbf{t}^{j})$. \textup{(ii)} Integrate on
simplexes such as $0<s_1<\cdots<s_r<t$. For $2\leq r\leq n-2$; this
simplifies the above expression into
\begin{eqnarray*}
R_{n,r}(\varphi)&=& \bigl[(n-r)! r! \bigr]^{2}
\\
&&\!{}\times \int
_{(\mathcal S_{t}^2)^{2}} \int_{(\mathcal S_{t}^{2})^{2}} \prod
_{i,j=1}^{2} \varphi \bigl(\max \bigl(
\sigma_{2}^{i},\tau _{2}^{j} \bigr),
\min \bigl(\sigma _{1}^{i},\tau_{1}^{j}
\bigr) \bigr)
\\
&&\!\hspace*{63pt}{}\times\prod_{k=1}^{2} \biggl[
\frac{(\sigma_2^{k}-\sigma
_1^{k})^{r-2} (\tau
_2^{k}-\tau_1^{k})^{n-r-2}}{(r-2)! (n-r-2)!} \biggr] \,d\sigma_1^{k} \,d
\sigma_2^{k} \,d\tau_1^{k} \,d
\tau_2^{k},
\end{eqnarray*}
that is,
\begin{eqnarray}
\label{eqdef-R-n-r}
R_{n,r}(\varphi)&=& P_{r}(n)\nonumber\hspace*{-5pt}
\\
&&{}\times  \int
_{(\mathcal S_{t}^2)^{2}} \int_{(\mathcal S_{t}^{2})^{2}} \prod
_{i,j=1}^{2} \varphi \bigl(\max \bigl(
\sigma_{2}^{i},\tau _{2}^{j} \bigr),
\min \bigl(\sigma _{1}^{i},\tau_{1}^{j}
\bigr) \bigr)\hspace*{-5pt}
\\
&&\hspace*{63pt}{}\times\prod_{k=1}^{2} \bigl(
\sigma_2^{k}-\sigma_1^{k}
\bigr)^{r-2} \bigl(\tau _2^{k}-\tau
_1^{k} \bigr)^{n-r-2} \,d\sigma_1^{k}
\,d\sigma_2^{k} \,d\tau_1^{k} \,d
\tau_2^{k},\nonumber\hspace*{-5pt}
\end{eqnarray}
where we have set $P_{r}(n)=[ (n-r) (n-r-1) r (r-1) ]^{2}$.

We now recall that $f_h$ is defined by~(\ref{definf-h}), which means
that we shall apply identity~(\ref{eqdef-R-n-r}) to the function
$\varphi
=\Phi_h$ where $\Phi_h$ is introduced at Notation~\ref{notdef-simplex-Phi-h}. Toward this aim, observe that one can write
$\Phi_h(u,v)=\ell_{n,h}(v-u)$ with $\ell_{n,h}\dvtx \mathbb R_+\to
\mathbb R_+$ given by
$\ell_{n,h}(w):= \int_{0}^{h} p_{w}^{(n-2)}(y) (h-y) \,dy$. Thanks
to the expression~(\ref{formulderiv}) we have already recalled for $
p_{w}^{(n-2)}$ we thus get
\[
\ell_{n,h}(w)=\frac{(-1)^n(n-2)! } {\sqrt{2\pi} } \int_{0}^{h}
\frac{e^{-y^2/(2w)}}{w^{(n-1)/2}} H_{n-2} \biggl(\frac{y}{
w^{1/2}} \biggr) (h-y) \,dy
\leq c_n \frac{h^2}{w^{(n-1)/2}}.
\]
Plugging this relation into~(\ref{eqdef-R-n-r}), we obtain that for
$2\leq r\leq n-2$,
%
\begin{eqnarray}
\label{eqbnd-R-n-r} && \| f_h \otimes_r
f_h\|_{L^2([0,t]^{2n-2r})}^2\nonumber\hspace*{-10pt}
\\
&&\qquad \le c_{n,r} h^8 \int_{(\mathcal S_{t}^2)^{2}} \int
_{(\mathcal S_{t}^{2})^{2}} \prod_{i,j=1}^{2}
\bigl(\max \bigl(\sigma_{2}^{i},\tau_{2}^{j}
\bigr)-\min \bigl(\sigma _{1}^{i},\tau_{1}^{j}
\bigr) \bigr)^{-(n-2)/2}
\nonumber\hspace*{-10pt}
\\
&&\hspace*{79pt}\quad\qquad{}\times\prod_{k=1}^{2} \bigl(
\sigma_2^{k}-\sigma_1^{k}
\bigr)^{r-2} \bigl(\tau _2^{k}-\tau
_1^{k} \bigr)^{n-r-2} \,d\sigma_1^{k}
\,d\sigma_2^{k} \,d\tau_1^{k} \,d
\tau_2^{k}\hspace*{-10pt}
\\
&&\qquad \le c_{n,r} h^8 \int_{(\mathcal S_{t}^2)^{2}} \int
_{(\mathcal S_{t}^{2})^{2}} \prod_{i,j=1}^{2}
\bigl(\max \bigl(\sigma_{2}^{i},\tau_{2}^{j}
\bigr)-\min \bigl(\sigma _{1}^{i},\tau_{1}^{j}
\bigr) \bigr)^{-3/2}\nonumber\hspace*{-10pt}
\\
&&\hspace*{112pt}{}\times  \prod_{k=1}^{2}
\,d\sigma_1^{k} \,d\sigma_2^{k} \,d
\tau_1^{k} \,d\tau_2^{k}.\nonumber\hspace*{-10pt}
\end{eqnarray}
This kind of integral will be handled in Lemma~\ref{lemtechni}, which
allows to conclude that $\| f_h \otimes_r f_h\|_{L^2([0,t]^{2n-2r})}^2
\le c_{n,r} h^8$. Hence, relation~(\ref{eqcontraction-f-h})
obviously holds true, which in turn implies~(\ref{eqcontraction-f-h-plus-g-h}).

We have thus proved relation~(\ref{eqcontraction-f-h-plus-g-h}) for
$n\ge4$ and $2\le r\le n-2$. The remaining possibilities can be
treated applying the same reasoning: in the case $(n\geq4, r\in\{
1,n-1\})$, we have
\begin{eqnarray*}
&& \|f_h \otimes_r f_h \|_{L^2([0,t]^{2n-2r})}^2
\\
&&\qquad \leq c_{r,n} h^8
\\
&&\quad\qquad{}\times  \int_{(\mathcal S_{t}^2)^2}\int
_{[0,t]^2} \prod_{i,j=1}^2
\bigl(\max \bigl(\sigma^i,\tau_2^j \bigr)-
\min \bigl(\sigma^i,\tau_1^j \bigr)
\bigr)^{-1} \,d\sigma^1 \,d\sigma^2\prod
_{k=1}^2 \,d\tau_1^k\,d\tau_2^k
\end{eqnarray*}
and we recognize here the second (finite) integral involved in Lemma
\ref{lemtechni}. Finally, the case $(n=2, r=1)$ reduces to
\begin{eqnarray*}
&& \|f_h \otimes_r f_h
\|_{L^2([0,t]^{2n-2r})}^2
\\
&&\qquad \leq c_{r,n} h^8
\\
&&\quad\qquad {}\times \int_{[0,t]^2} \int
_{[0,t]^2} \prod_{i,j=1}^2
\bigl(\max \bigl(\sigma^i,\tau^j \bigr)-\min \bigl(
\sigma^i,\tau^j \bigr) \bigr)^{-1/2} \,d
\sigma^1 \,d\sigma^2 \,d\tau^1 \,d
\tau^2,
\end{eqnarray*}
so that we can conclude with Lemma~\ref{lemtechni} as well.
\end{pf}

Summarizing our considerations up to now, we have obtained the
following convergence in law for the finite-dimensional distributions
of $X^{2m,h}$:

%
\begin{proposition}\label{proppointwise-cvgce-X-2m}
Taking up the notation of Theorem~\ref{thmcvgce-L2-modulus-chaos},
consider $t_1,\ldots,t_d\in[0,1]$ and $m\ge1$. Then as $h\to0$ we have
\begin{eqnarray}
\frac{1}{h^{2} [\ln(1/h)]^{1/2}} \bigl(X^{2m,h}_{t_1},\ldots,X^{2m,h}_{t_d}
\bigr) \xrightarrow{(d)} \sigma_{m} \mathcal N(0,\Gamma)\nonumber
\\
\eqntext{\displaystyle\mbox{where } \sigma_{m}^{2}= \frac{c (2m-2)!}{2^{2m} [(m-1)!]^2}}
\end{eqnarray}
and $\mathcal N(0,\Gamma)$ is the centered Gaussian law in $\mathbb
R^d$ with
covariance matrix $\Gamma(i,j)=\min(t_i,t_j)$.
\end{proposition}

\begin{pf}
We shall simultaneously apply Propositions~\ref{propfourth-moment} and
\ref{propfourth-moment-d-dim} to the random vector $h^{-2} [\ln
(1/h)]^{-1/2} (X^{2m,h}_{t_1},\ldots,X^{2m,h}_{t_d})$, which is of
course a sequence of random vectors in $(\mathcal H_{2m})^d$. Moreover:

(i) According to Proposition~\ref{proprenorm}, we have for
every $t_i,t_j$,
\[
\lim_{h\to0} \frac{\mathbf E[X^{2m,h}_{t_i}X^{2m,h}_{t_j}]}{h^{4}
\ln(1/h)} = \sigma_{m}^{2}
\Gamma(i,j).
\]

(ii) For each fixed $t_i$, one can write $X^{2m,h}_{t_i} =
I_{2m}(k_{2m,h})$ with $k_{2m,h}=\frac{16}{(2m)!} (f_h+g_{h,t_i})\cdot
\mathbf{1}_{[0,t_i]^{2m}}$. Then Proposition~\ref{propcontraction-f-h-plus-g-h} asserts that, for all $r\in\{1,\ldots,2m-1\}$,
\[
\lim_{h\to0} \frac{1}{h^4 \ln(1/h)} \|k_{2m,h} \otimes
_r k_{2m,h}\| _{\mathcal H^{\otimes2(n-r)}} =0.
\]
We can thus combine Propositions~\ref{propfourth-moment} and~\ref{propfourth-moment-d-dim} so as to conclude.
\end{pf}

\subsection{Tightness}

Now endowed with Proposition~\ref{proppointwise-cvgce-X-2m}, the proof
of Theorem~\ref{thmcvgce-L2-modulus-chaos} reduces to showing that the
sequence of processes $\{h^{-2} \ln(1/h)^{-1/2} \*X^{2m,h}_t, t\in
[0,1]\}$ is tight. These are contents of the following proposition.

%
\begin{proposition}\label{propotight}
Fix $m\geq1$. Then:
\begin{longlist}[(ii)]
\item[(i)]
There exist $\lambda>0$ and a constant $c_m$ such that for all $0\leq
s\leq t\leq1$,
%
\begin{equation}
\label{tight-bound} \sup_{h\in(0,1)}\frac{1}{h^4 \ln(1/h)} \mathbf E \bigl[
\bigl| X^{2m,h}_t-X^{2m,h}_s \bigr|^2
\bigr] \leq c_m \llvert t-s\rrvert ^\lambda.
\end{equation}

\item[(ii)] The family $\{h^{-2} \ln(1/h)^{-1/2} X^{2m,h}; h>0\}$
is tight in $\mathcal C([0,1])$.
\end{longlist}
\end{proposition}

In\vspace*{1pt} order to prove Proposition~\ref{propotight}, recall that
$X^{2m,h}_t=\frac{16}{(2m)!} I_{2m}( (f_h+g_{h,t})\cdot\mathbf
{1}_{[0,t]^{2m}}
)$ with $f_h,g_h$ defined by~(\ref{definf-h})--(\ref{defing-h}). We
will also use the following additional property of $g_h$, which can be
readily checked with the help of~(\ref{bound-Phi-2})--(\ref
{bound-Phi-3}), as in the proof of Lemma~\ref{lemmag-h}.

%
\begin{lemma}\label{lemmag-h-2}
Fix $m\geq1$, and recall that we write $g_{h,t}$ instead of
$g_{h,t,2m}$ for notational sake. Then there exist $\lambda>0$ and a
constant $c_m$ such that for all $0\leq s\leq t\leq1$, one has
\[
\sup_{h\in(0,1)}\|g_{h,t}-g_{h,s}
\|^2_{L^2([0,s]^{2m})} \leq c_m h^4 \llvert
t-s \rrvert ^\lambda.
\]
\end{lemma}

We can now turn to the proof of the main proposition of this section.

\begin{pf*}{Proof of Proposition~\ref{propotight}}
We prove the two claims of the proposition separately:
\begin{longlist}
\item[\textit{Step} 1.] \textit{Proof of assertion} (i).
Let us write
\begin{eqnarray*}
&& X^{2m,h}_t-X^{2m,h}_s
\\
&&\qquad =
c_m \bigl\{ I_n \bigl( (f_h+g_{h,t})
\cdot\{ \mathbf {1}_{[0,t]^{2m}}-\mathbf{1} _{[0,s]^{2m}}\}
\bigr)+I_n \bigl( (g_{h,t}-g_{h,s})\cdot
\mathbf{1}_{[0,s]^{2m}} \bigr) \bigr\}. %
\end{eqnarray*}
The second term of this decomposition can be treated with Lemma~\ref{lemmag-h-2}. As for the first term, we clearly have
\[
\mathbf E \bigl[ \bigl(I_n \bigl( (f_h+g_{h,t})
\cdot\{ \mathbf{1} _{[0,t]^{2m}}-\mathbf{1} _{[0,s]^{2m}}\} \bigr)
\bigr)^{2} \bigr] \le c_m \bigl(A^{h}_{s,t}+B^{h}_{s,t}
\bigr)
\]
with
\begin{eqnarray*}
A^{h}_{s,t} &=& \mathop{\int_{0<t_1< \cdots<t_{2m}}}_{s<t_{2m} <t}
\Phi_h(t_1,t_{2m})^2
\,dt_1 \cdots \,dt_{2m},
\\
B^{h}_{s,t} &=& \mathop{\int_{0<t_1< \cdots<t_{2m}}}_{s<t_{2m} <t}
g_{h,t}(t_1,\ldots,t_{2m})^2
\,dt_1\cdots \,dt_{2m}.
\end{eqnarray*}
We now bound those two terms: first, split up $A^h_{s,t}$ into
$A^h_{s,t}=c_m \{A^{h,1}_{s,t}+A^{h,2}_{s,t}\}$ with
\[
A^{h,1}_{s,t}\equiv\int_{0 <t_1<s<t_{2m} <t}
(t_{2m}-t_1)^{2m-2} \Phi _h(t_1,t_{2m})^2
\,dt_1 \,dt_{2m}
\]
and
\begin{eqnarray*}
A^{h,2}_{s,t}&\equiv& \int_{s<t_1<t_{2m} <t}
(t_{2m}-t_1)^{2m-2} \Phi_h(t_1,t_{2m})^2
\,dt_1 \,dt_{2m}
\\
&=&\int_{0<t_1<t_{2m} <t-s} (t_{2m}-t_1)^{2m-2}
\Phi_h(t_1,t_{2m})^2
\,dt_1 \,dt_{2m}.
\end{eqnarray*}
Then by~(\ref{bound-Phi-1}), one has for any small $\varepsilon>0$,
\begin{eqnarray*}
A^{h,1}_{s,t}&\leq& c_m h^4\int
_{0 <t_1<s<t_{2m} <t} (t_{2m}-t_1)^{-1}
\,dt_1 \,dt_{2m}
\\
&\leq& c_m h^4\int_{0 <t_1<s}
(s-t_1)^{-1+\varepsilon} \,dt_1 \int_{s<t_{2m} <t}
(t_{2m}-t_1)^{-\varepsilon} \,dt_{2m}
\\
& \leq&
c_m h^4 \llvert t-s \rrvert ^{1-\varepsilon}.
\end{eqnarray*}
As far as $A^{h,2}_{s,t}$ is concerned, we can follow the lines of the
proof of Proposition~\ref{proprenorm} and conclude that $\lim\frac
{1}{h^4 \ln(1/h)}A^{h,2}_{s,t}=c_m \llvert t-s\rrvert $ as $h$ tends to zero,
which gives us a proper estimate.

Finally, the bound for $B^{h}_{s,t}$ is easily derived as follows:
first notice that, according to Definition~(\ref{defing-h}) of
$g_{h,t}$, we have
\[
B^{h}_{s,t} \le c \mathop{\int_{0<t_1< \cdots<t_{2m}}}_{s<t_{2m} <t}
\bigl[\Phi_{h}^{2}(t_1,t) +
\Phi_{h}^{2}(0,t) + \Phi_{h}^{2}(0,t_{2m})
\bigr] \,dt_1\cdots \,dt_{2m}.
\]
The three terms above are handled easily, and along the same lines,
thanks to~(\ref{bound-Phi-1}). For the first one, we get, for instance,
\begin{eqnarray*}
&& \mathop{\int_{0<t_1<t_{2m}<t}}_{s<t_{2m} <t} (t_{2m}-t_1)^{2m-2}
\Phi_h^2(t_1,t) \,dt_1\cdots
\,dt_{2m}
\\[-1pt]
&&\qquad \leq c_{m} h^4 \mathop{\int_{0<t_1<t_{2m}<t}}_{s<t_{2m} <t}
(t_{2m}-t_1)^{2m-2} (t-t_1)^{-2m+1}
\,dt_1 \,dt_{2m}
\\[-1pt]
&&\qquad \leq c_m h^4 (t-s) \int_{0< t_1 <s }
(t-t_1)^{-1} \,dt_1
\\[-1pt]
&&\quad\qquad{}+c_m
h^4 \int_{s < t_1 < t}(t-t_1)^{2m-1}
(t-t_1)^{-2m+1} \,dt_1
\\[-1pt]
&&\qquad \leq c_m h^4 \llvert t-s\rrvert ^{1-\varepsilon}
\end{eqnarray*}
for any small $\varepsilon>0$. Gathering now our estimates on
$A^{h}_{s,t}$ and
$B^{h}_{s,t}$, we have proved our claim~(\ref{tight-bound}).
\end{longlist}
\begin{longlist}
\item[\textit{Step} 2.] \textit{Proof of assertion} (ii).
With inequality~(\ref{tight-bound}) in hand, the tightness result is
easily deduced. Indeed, the random variable $X^{2m,h}_t-X^{2m,h}_s$
living in a finite chaos, we are in a position to use
hypercontractivity (see \cite{Nu-bk}) and assert that for all $p\geq1$,
\[
\sup_{h\in(0,1)} \frac{1}{h^{4p} \ln(1/h)^p} \mathbf E \bigl[ \bigl|
X^{2m,h}_t-X^{2m,h}_s \bigr|^{2p}
\bigr] \leq c_{m,p} \llvert t-s \rrvert ^{\lambda p}.
\]
As we have done before, Kolmogorov's tightness criterion is therefore
verified for every $p$ such that $\lambda p>1$, which finishes our proof.\quad\qed
\end{longlist}\noqed
\end{pf*}

\section{$L^2$ modulus of $2$-dimensional local time on chaoses}\label{sec2-dim}
We now carry on the task of proving Theorem~\ref{thmcvgce-L2-modulus-chaos-2d} for projections of the quantity
$H_t^{h}(B)$ defined by~(\ref{eqhamiltonian-mod-cty-loc-time}) when
$B$ is a two-dimensional Brownian motion. For the sake of simplicity,
we shall take up most of the notation introduced at Section~\ref
{sec1-dim}, starting from the fact that our Hamiltonian is written
$H_t^{h}(B)$ independently of the fact that $B$ is a one-dimensional or
a two-dimensional Brownian motion. Like in \cite{HN10}, we shall also
invoke the following important representation formula for $H_t^{h}(B)$:
%
\begin{eqnarray}
\label{eqlocal-modulus-dirac} H_t^{h}(B) &=& \int_{[0,t]^{2}}
\bigl[\delta_0(B_v-B_u+h)
\nonumber\\[-10pt]\\[-10pt]
&&\hspace*{27pt}{} +\delta_0(B_v-B_u-h)-2
\delta_0(B_v-B_u) \bigr] \,du \,dv.\nonumber
\end{eqnarray}

%
\begin{remark}
The reader should be aware of the fact that expression (\ref
{eqlocal-modulus-dirac}) is formal, since the self-intersection local
time is a divergent quantity for a two-dimensional Brownian motion.
Notice, however, that only projections on fixed chaoses will be
considered in the sequel, and all projections of the random
distribution defined by~(\ref{eqlocal-modulus-dirac}) are well defined.
\end{remark}

Next, we introduce the equivalent of the functions $\Phi$ introduced at
Notation~\ref{notdef-simplex-Phi-h}. In the 2-d case, we will let this
set of functions appear in a Fourier transform procedure, as follows.

%
\begin{notation}\label{notdef-simplex-Phi-h-2}
For every $n\geq2$, every $\mathbf{i}=(i_1,\ldots,i_n)\in\{1,2\}^n$ and
every $h\in\mathbb R^2$, we define a function $\Phi_\mathbf
{i}(t_1,t_2)$ as
\[
\Phi_\mathbf{i}(t,s)=\Phi_{\mathbf{i},h}(t,s):=\int
_{\mathbb R^2} d\xi \Biggl( \prod_{k=1}^n
\xi_{i_k} \Biggr) \frac{\{1-\cos
(\langle h,\xi\rangle)\}
}{|\xi
|^4} e^{-(t-s)|\xi|^2/2}.
\]
\end{notation}

%
\begin{remark}
In order to draw a link between $\Phi_\mathbf{i}$ and the function
$\Phi=\Phi^{1\mbox{-}\mathrm{d}}$ introduced at Notation~\ref{notdef-simplex-Phi-h},
observe that, at least for $n=2m$ even (the only cases of interest in
our study), one can also write $\Phi^{1\mbox{-}\mathrm{d}}$ as
\begin{eqnarray*}
\Phi^{1\mbox{-}\mathrm{d}}_{h,2m}(t,s) &=& \int_0^h
p^{(2m-2)}_{t-s}(y) (h-y) \,dy
\\
&=& p^{(2m-2)}_{t-s}(h)-p^{(2m-2)}_{t-s}(0)
\\
&=& \frac{1}2 \bigl\{p^{(2m-2)}_{t-s}(h)+p^{(2m-2)}_{t-s}(-h)-2
p^{(2m-2)}_{t-s}(0) \bigr\}
\\
&=& c \int_{\mathbb R} d\xi \xi^{2m-2} \bigl\{1-\cos(h
\xi) \bigr\} e^{-(t-s)\xi^2 /2},
\end{eqnarray*}
where we have used the Fourier representation $p_{t-s}(x)=c\int_{\mathbb R
}\,d\xi e^{\imath x \xi} e^{-(t-s) \xi^2/2}$.
\end{remark}

The continuity properties of the functions $\Phi_\mathbf{i}$,
mimicking~(\ref{bound-Phi-1})--(\ref{bound-Phi-3}), are summarized below.

%
\begin{lemma}
Fix $m \geq1$ and $\alpha\in(0,2)$. Then there exists a constant
$c_{m,\alpha}$ such that for every $h\in\mathbb R^2$ and all $0\leq
t_1 <s <t<t_2$,
%
\begin{eqnarray}
\label{borne-phi-h-i} \max_{\mathbf{i}\in\{1,2\}^{2m}} \bigl|\Phi_{\mathbf{i},h}(t,s)\bigr| &\leq&
c_{m,\alpha} |h|^\alpha\llvert t-s \rrvert ^{-m+1-(\alpha/2)},\hspace*{-10pt}
\\
\label{borne-phi-h-i-2} \max_{\mathbf{i}\in\{1,2\}^{2m}} \bigl|\Phi_{\mathbf{i},h}(t,t_1)-
\Phi _{\mathbf{i},h}(s,t_1)\bigr| &\leq& c_{m,\alpha}
|h|^\alpha\llvert t-s \rrvert ^\lambda\llvert s-t_1
\rrvert ^{-m+1-(\alpha/2)-\lambda},\hspace*{-10pt}
\\
\label{borne-phi-h-i-3} \qquad\quad\max_{\mathbf{i}\in\{1,2\}^{2m}} \bigl|\Phi_{\mathbf{i},h}(t_2,t)-
\Phi _{\mathbf{i},h}(t_2,s)\bigr| &\leq& c_{m,\alpha}
|h|^\alpha\llvert t-s \rrvert ^\lambda\llvert t_2-t
\rrvert ^{-m+1-(\alpha/2)-\lambda}.\hspace*{-10pt}
\end{eqnarray}
\end{lemma}

The strategy of the proof for Theorem~\ref{thmcvgce-L2-modulus-chaos-2d} is now similar to the one-dimensional
case of Theorem~\ref{thmcvgce-L2-modulus-chaos}: exact computation of
the chaos decomposition, analysis of the variance and contraction
properties for $H_t^{h}(B)$. This is why we shall skip some details
below, and mainly stress the differences between the 1-d and the 2-d case.

\subsection{Stochastic analysis in dimension 2}
The Malliavin calculus setting we shall use in this section is very
similar to the one explained at Section~\ref
{secprelim-stoch-analysis}. However, we stress here some differences
between stochastic analysis for 1-d and 2-d Brownian motions.

Notice first that our standing Wiener space is now the space of
$\mathbb R
^2$-valued continuous functions $\mathcal C(\mathbb R_+; \mathbb
R^2)$, while the related
Hilbert space is $\mathcal H\equiv(L^{2}([0,1]))^{2}$. We set
$B=(B^{1},B^{2})$ for the two-dimensional Wiener process and for
$h=(h^{1},h^{2})\in\mathcal H$ we define $B(h)=B^{1}(h^{1})+B^{2}(h^{2})$.
Starting from this definition of Wiener integral, the Malliavin
derivatives and Sobolev spaces are defined along the same lines as in
Section~\ref{secprelim-stoch-analysis}.

Stroock's formula takes the following form in the two-dimensional
situation: designate by $\mathbf{i}=(i_1,\ldots,i_n)$ a generic
element of $\{
1,2\}^{n}$. Then for a functional $F\in\mathbb D^{\infty,2}$, we have
$J_n(F)=I_{n}(f_{n})$, with
%
\begin{eqnarray}\label{eqstroock-formula-2d}
I_{n}(f_{n}) = \sum
_{\mathbf{i}\in\{1,2\}^{n}} \int_{[0,1]^{n}} f_{\mathbf{i}
}(t_1,
\ldots,t_n) \,dB^{i_1}_{t_1} \cdots
\,dB^{i_n}_{t_n},
\nonumber\\[-10pt]\\[-10pt]
\eqntext{\displaystyle \qquad
f_{\mathbf{i}}(t_1,
\ldots,t_n) = \frac{\mathbf E [\mathcal
D_{t_1,\ldots,t_n}^{\mathbf{i}} F  ]}{n!}.}
\end{eqnarray}

Finally, Propositions~\ref{propfourth-moment} and~\ref{propfourth-moment-d-dim} are still valid in our 2-d Wiener space
context, except for the fact that the expression for the $r$th
contraction ($r\in\{1,\ldots,n\}$) of a given kernel $f$ reads as
follows: for $\mathbf{k}^1,\mathbf{k}^2 \in\{1,2\}^{n-r}$ and
$\mathbf{t}^1,\mathbf{t}^2 \in[0,1]^{n-r}$,
%
\begin{equation}
\label{eqdef-contraction-2d} (f\otimes_r f
)_{(\mathbf{k}^1,\mathbf{k}^2)} \bigl(\mathbf{t}^1,\mathbf {t}^2
\bigr):=\sum_{\mathbf{l}\in\{1,2\}
^r}\int_{[0,1]^r} \,d
\mathbf{s} f_{(\mathbf{k}^1,\mathbf
{l})} \bigl(\mathbf{t}^1,\mathbf{s}
\bigr)f_{(\mathbf{k}^2,\mathbf{l})} \bigl(\mathbf{t} ^2,\mathbf{s} \bigr).
\end{equation}

\subsection{Chaos decomposition of $H_t^{h}(B)$}

We are now ready to compute the projections $X^{n,h}$ of $H^{h}(B)$ on
chaoses, which is the analogous statement to Proposition~\ref{propproj-1-d} in the 2-d situation.

%
\begin{proposition}
For every $n\geq1$ and every nonzero $h\in\mathbb R^2$, recall that
we have
set $X^{n,h}_{t}= J_{n}(H^{h}_{t}(B))$ for the projection of
$H_t^{h}(B)$ onto the $n$th Wiener chaos. Then we have
$X^{n,h}_{t}=0$ if $n$ is odd and
%
\begin{eqnarray}\label{proj-2-dim}
X^{n,h}_{t}=\frac{c}{n!} \sum
_{\mathbf{i}\in\{1,2\}^n}\int_{[0,t]^n} \bigl\{
f_\mathbf{i}(t_1,\ldots,t_n)+g_{\mathbf{i},t}(t_1,
\ldots,t_n) \bigr\} \,dB^{i_1}_{t_1} \cdots
\,dB^{i_n}_{t_n}
\nonumber\\[-12pt]\\[-8pt]
\eqntext{\mbox{if $n$ is even}}
\end{eqnarray}
for some universal constant $c$. In the previous equation, the
symmetric functions $f_\mathbf{i}\in L^2(\mathbb R_+^n)$ and
$g_{\mathbf{i},t}\in
L^2([0,t]^n)$ are defined for each $\mathbf{i}\in\{1,2\}^n$ by
%
\begin{eqnarray}
\label{definf-h-2} && f_\mathbf{i}(t_1,\ldots,t_n)
\nonumber\\[-8pt]\\[-8pt]
&&\qquad =f_{\mathbf{i}}^h(t_1,
\ldots,t_n):=\Phi _\mathbf{i} \bigl(\min (t_1,
\ldots,t_n),\max(t_1,\ldots,t_n) \bigr)\nonumber
\end{eqnarray}
and
\begin{eqnarray}
\label{defing-h-2}
&& g_{\mathbf{i},t}(t_1,\ldots,t_n)\nonumber
\\
&&\qquad =g_{\mathbf{i},t}^h(t_1,
\ldots,t_n)
\\
&&\qquad:=-\Phi_\mathbf{i} \bigl(\min(t_1,\ldots,t_n),t
\bigr)+\Phi_\mathbf {i}(0,t)
-\Phi_\mathbf{i} \bigl(0,\max
(t_1,\ldots,t_n) \bigr),\nonumber
\end{eqnarray}
where we recall that the functions $\Phi_\mathbf{i}$ are introduced at
Notation~\ref{notdef-simplex-Phi-h-2}.
\end{proposition}

\begin{pf}
By applying Stroock's formula~(\ref{eqstroock-formula-2d}) to
expression~(\ref{eqlocal-modulus-dirac}) in a similar manner as in the
proof of Proposition~\ref{propproj-1-d}, we obtain that $X^{n,h}_t$ is
equal to
\begin{eqnarray*}
& =& \frac{c}{n!}\sum_{\mathbf{i}\in\{1,2\}^{n}} \int
_{[0,t]^n} \Biggl(\int_{\mathcal S_t^2} \biggl[
\frac{\partial^n p_{v-u}}{\partial x_\mathbf{i}}(h)
+\frac
{\partial^n
p_{v-u}}{\partial x_\mathbf{i}}(-h)
\\
&&\hspace*{151pt}{}
-2\frac{\partial^n
p_{v-u}}{\partial x_\mathbf{i}
}(0) \biggr]
\\
&&\hspace*{130pt}{}\times
\prod_{i=1}^n \mathbf{1}_{[u,v]}(t_i)
\,du \,dv \Biggr)\,
dB_{t_1}^{i_1} \cdots \,dB_{t_n}^{i_n}
\\
&=& c \sum_{\mathbf{i}\in\{1,2\}^{n}} \int_{\mathcal S_t^n}
\biggl(\int_{0}^{t_1}\!\! \int_{t_n}^{t}
\biggl[\frac{\partial^n p_{v-u}}{\partial x_\mathbf{i}}(h)+\frac
{\partial^n
p_{v-u}}{\partial x_\mathbf{i}}(-h)
\\
&&\hspace*{147pt}{} -2\frac{\partial^n
p_{v-u}}{\partial x_\mathbf{i}
}(0)
\biggr]\,du\,dv \biggr) \,dB_{t_1}^{i_1} \cdots \,dB_{t_n}^{i_n},
\end{eqnarray*}
where $p_t(x)$ stands here for the $2$-dimensional Gaussian kernel and
where we have set $\partial x_\mathbf{i}:=\partial x_{i_1}\cdots
\,\partial
x_{i_n}$. Observe first that $\frac{\partial^n p_{v-u}}{\partial
x_\mathbf{i}
}(h)+\frac{\partial^n p_{v-u}}{\partial x_\mathbf{i}}(-h)-2\frac
{\partial^n
p_{v-u}}{\partial x_\mathbf{i}}(0)$ vanishes when $n$ is odd, which
yields our
claim $X^{n,h}_{t}=0$ in this situation. In the case where $n$ is even,
use the Fourier representation
\[
p_{v-u}(x)=c\int_{\mathbb R^2} e^{\imath\langle x,\xi\rangle}
e^{-(v-u) |\xi
|^2/2} \,d\xi %
\]
and Fubini's theorem in order to derive
\[
X^{n,h}_t=c \sum_{\mathbf{i}\in\{1,2\}^{n}} \int
_{\mathcal S_t^n} \bigl[\Phi_\mathbf{i}(t_n,t_1)-
\Phi_\mathbf{i}(t_1,t)+\Phi _\mathbf{i}(0,t)-
\Phi_\mathbf{i}(t_n,0) \bigr] \,dB_{t_1}^{i_1}
\cdots \,dB_{t_n}^{i_n}.
\]
Formula~(\ref{proj-2-dim}) follows by symmetrization.
\end{pf}

\subsection{Asymptotic behavior of the variance}\label{secbehavior-variance-2d}

With expression~(\ref{proj-2-dim}) in hand, we now proceed as for the
one-dimensional case, and compute $\mathbf E[X^{2m,h}_t X^{2m,h}_s]$ in
order to see how this kind of quantity scales in $h$. Let us first
label the following analytic lemma which will feature in our future
computations.

%
\begin{lemma}\label{leminvar-int}
Fix $m\geq1$ and $\varphi\dvtx  \mathbb R^2 \to\mathbb R$ such that
%
\begin{equation}
\label{conditions-phi} \bigl|\varphi(x,y)\bigr| \leq c \bigl\{|x|^{1-\varepsilon}|y|^{1-\varepsilon
}+|x|^{1+\varepsilon}|y|^{1+\varepsilon
}
\bigr\}
\end{equation}
for some small $\varepsilon>0$. For every nonzero $e\in\mathbb R^2$, set
%
\begin{equation}
\label{eqdef-L-phi}\qquad L^\varphi_{2m,e}:=\int_{\mathbb R^2}
d\xi\int_{\mathbb R^2} d\eta \frac{\langle
\xi,\eta\rangle^{2m}}{|\xi|^4|\eta|^4} \varphi \bigl(
\langle \xi,e\rangle, \langle\eta,e\rangle \bigr) \exp \biggl(-\frac{1}2
\bigl(| \xi|^2+|\eta |^2 \bigr) \biggr).
\end{equation}
Then $L^\varphi_{2m,e}$ is well defined and for all unit vectors
$e,\tilde{e}\in\mathbb R^2$, one has $L^\varphi_{2m,e}=L^\varphi
_{2m,\tilde{e}}$. We denote by $L_{2m}^\varphi$ this common quantity.
\end{lemma}

\begin{pf}
The fact that $L^\varphi_{2m,e}$ is well defined can be easily checked
using~(\ref{conditions-phi}). As for the second assertion, introduce
the rotation $A$ which sends $e$ to $\tilde{e}$ and then use the
isometric change of variables $\xi=A^\ast\tilde{\xi}$, $\eta
=A^\ast
\tilde{\eta}$, so as to turn $L^\varphi_{2m,e}$ into~$L^\varphi
_{2m,\tilde{e}}$.
\end{pf}

We will also make use of the following uniform estimate for $g_{\mathbf{i},t}^h$, which (as in the proof of Lemma~\ref{lemmag-h}) can be easily
derived from the bound~(\ref{borne-phi-h-i}):

%
\begin{lemma}\label{lemget-rid}
Fix $m\geq1$, and recall that for every $t> 0$ and every $\mathbf
{i}\in\{
1,2\}^{2m}$, $g_{\mathbf{i},t}$ is defined by~(\ref{defing-h}). Then there
exist a constant $c_m$ and a small $\varepsilon>0$ such that for
every $h\in
\mathbb R^2$,
\[
\sup_{t\in[0,1],\mathbf{i}\in\{1,2\}^{2m}} \bigl\| g_{\mathbf{i},t}^h \bigr\|
_{L^2([0,t]^{2m})}^2 \leq c_m |h|^{2+\varepsilon}.
\]
\end{lemma}

We can now compute the correct order of $\mathbf E[X^{2m,h}_t X^{2m,h}_s]$
as follows.

%
\begin{proposition}\label{propvar-dim-2}
Fix $m\geq1$. Then for all $0\leq s \leq t\leq1$, it holds that
\[
\lim_{h\to0} |h|^{-2} \mathbf E \bigl[X^{2m,h}_t
X^{2m,h}_s \bigr] =\sigma_m^2 s
\qquad\mbox{with } \sigma_m^2=\frac{c L_{2m}^\varphi}{(2m-2)!},
\]
where $c$ is a universal constant, where $\varphi$ is defined for every
$(x,y) \in\mathbb R^2$ by
\[
\varphi(x,y):=\int_0^\infty\frac{du}{u^3}
\bigl\{1-\cos(ux) \bigr\} \bigl\{1-\cos (uy) \bigr\}
\]
and where we recall that $L_{2m}^\varphi$ has been introduced at
relation~(\ref{eqdef-L-phi}).
\end{proposition}

\begin{pf}
Recall that
\[
\mathbf E \bigl[X^{2m,h}_t X^{2m,h}_s
\bigr] =\frac{c}{(2m)!} \sum_{\mathbf{i}\in\{
1,2\}^{2m}} \bigl
\langle(f_\mathbf{i}+g_{\mathbf{i},t}) \cdot\mathbf{1}
_{[0,t]^{2m}},(f_\mathbf{i} +g_{\mathbf{i},s}) \cdot
\mathbf{1}_{[0,s]^{2m}} \bigr\rangle_{L^2(\mathbb R^{2m})}
\]
and thanks to Lemma~\ref{lemget-rid}, we only have to focus on the sum
of the terms
\[
\mathcal{A}^\mathbf{i}_{s,t}:=\langle f_\mathbf{i}
\cdot\mathbf{1} _{[0,t]^{2m}},f_\mathbf{i}\cdot \mathbf{1}_{[0,s]^{2m}}
\rangle_{L^2(\mathbb R^{2m})}.
\]
An integration over the simplex gives
\[
\mathcal{A}^\mathbf{i}_{s,t}=\frac{(2m)!}{(2m-2)!} \int
_{\Delta
_s^2} \Phi_{\mathbf{i},h}(t_{2m},t_1)^2
(t_{2m}-t_1)^{2m-2} \,dt_1\,dt_{2m}.
\]
The change of variables $t_{2m}-t_1=\tau$ and $t_1=\sigma$ easily
leads us to
\[
\mathcal{A}_{s,t}^\mathbf{i}=\frac{(2m)!}{(2m-2)!} \int
_0^s (s-\tau ) \Phi_{\mathbf{i}, h/{\tau^{1/2}}}(1,0)^2
\,d\tau.
\]
Setting $e_{h}\equiv\frac{h}{|h|}$, the change of variable
$u=|h|/\tau
^{1/2}$ now gives
\[
\mathcal{A}_{s,t}^\mathbf{i}=\frac{2(2m)!}{(2m-2)!}|h|^2
\int_{|h|/s^{1/2}}^\infty u^{-3} \biggl( s-
\frac{|h|^2}{u^2} \biggr)\Phi _{\mathbf{i},u e_{h}}(1,0)^2 \,du.
\]
By using~(\ref{borne-phi-h-i}), one can check that $|h|^2 \int_{|h|/s^{1/2}}^\infty u^{-5}\Phi_{\mathbf{i},u e_{h}}(1,0)^2 \,du
\to0$ as
$h\to0$, so that the main contribution will come from the terms
\[
\widehat{\mathcal{A}}^\mathbf{i}_{s,t}:=\frac{2(2m)!}{(2m-2)!}|h|^2
s \int_{|h|/s^{1/2}}^\infty u^{-3}
\Phi_{\mathbf{i},u e_{h}}(1,0)^2 \,du. %
\]
Now write
\begin{eqnarray*}
&& \Phi_{\mathbf{i},u e_{h}}(1,0)^2
\\
&&\qquad =\int_{\mathbb R^2}d\xi\int
_{\mathbb R^2} d\eta \frac{
\prod_{k=1}^{2m} \xi_{i_k} \eta_{i_k} }{|\xi|^4 |\eta
|^4}
\\
&&\hspace*{91pt}{}\times \bigl\{ 1-\cos \bigl(u\langle
e_{h},\xi\rangle \bigr) \bigr\}\bigl\{ 1-\cos \bigl(u\langle
e_{h},\eta \rangle \bigr) \bigr\}e^{-(1/2)(|\xi|^2+|\eta|^2)}
\end{eqnarray*}
and observe that $\sum_{\mathbf{i}\in\{1,2\}^{2m}} \prod_{k=1}^{2m}
\xi_{i_k} \eta_{i_k} =\langle\xi,\eta\rangle^{2m}$. Thus, thanks to
Lemma~\ref{leminvar-int} and using Fubini theorem, we deduce that
\[
|h|^{-2}\sum_{\mathbf{i}\in\{1,2\}^{2m}}\widehat{\mathcal
{A}}^\mathbf {i}_{s,t}=\frac
{2(2m)!}{(2m-2)!}s\cdot
L_{2m}^{\varphi_h},
\]
where $\varphi_h$ is defined as
\[
\varphi_h(x,y):=\int_{|h|/s^{1/2}}^\infty
\frac{\{1-\cos(ux)\}\{
1-\cos
(uy)\}}{u^3}\,du. %
\]
Finally, the convergence of $L_{2m}^{\varphi_h}$ toward
$L_{2m}^{\varphi
}$ easily follows from the fact that $\varphi$ satisfies relation
(\ref
{conditions-phi}) for some small $\varepsilon>0$, and this achieves
the proof.
\end{pf}

\subsection{Contractions}

We now turn to the contractions estimation for the functions
$f_{h},g_{h}$, where our two-dimensional contractions are defined by
(\ref{eqdef-contraction-2d}). The following is of course an analog of
Proposition~\ref{propcontraction-f-h-plus-g-h} in our 2-d setting.

%
\begin{proposition}\label{propcontractions-2-d}
For every $r\in\{1,\ldots,n-1\}$, one has
\begin{eqnarray*}
&& \bigl\| (f_{.} \mathbf{1}_{[0,t]^n}+g_{.,t}) \otimes
_r (f_{.} \mathbf{1}
_{[0,t]^n}+g_{.,t})\bigr\|^2
\\
&&\qquad = \sum_{\mathbf{i}\in\{1,2\}^{2n-2r}} \bigl\| \bigl((f_{.}
\mathbf{1} _{[0,t]^n}+g_{.,t})\otimes
_r (f_{.} \mathbf{1}_{[0,t]^n}+g_{.,t})
\bigr)_{\mathbf{i}}\bigr\| ^2_{L^2([0,t]^{2n-2r})}
\\
&&\qquad =o \bigl(|h|^4
\bigr).
\end{eqnarray*}
\end{proposition}

\begin{pf}
Thanks to Lemma~\ref{lemget-rid}, it suffices to focus on the sum
\[
\sum_{\mathbf{k}^1,\mathbf{k}^2\in\{1,2\}^{n-r}} \bigl\| \bigl((f_{.} \mathbf{1}
_{[0,t]^n})\otimes_r (f_{.}
\mathbf{1}_{[0,t]^n}) \bigr)_{(\mathbf{k}^1,\mathbf{k}^2)}\bigr\| ^2_{L^2([0,t]^{2n-2r})}.
\]
Assume first that $2m\geq4$ and $2 \leq r \leq2m-2$. Then we can
follow the lines of the proof of Proposition~\ref{propcontraction-f-h-plus-g-h} and deduce that
\begin{eqnarray*}
&& \sum_{\mathbf{k}^1,\mathbf{k}^2\in\{1,2\}^{n-r}} \bigl\| \bigl((f_{.} \mathbf{1}
_{[0,t]^n})\otimes_r (f_{.}
\mathbf{1}_{[0,t]^n}) \bigr)_{(\mathbf
{k}^1,\mathbf
{k}^2)}\bigr\| ^2_{L^2([0,t]^{2n-2r})}
\\
&&\qquad = c_m \mathop{\sum_{\mathbf{k}^1,\mathbf{k}^2 \in\{1,2\}
^{n-r}}}_{\mathbf{l}^1,\mathbf{l}^2 \in
\{1,2\}^r}
\int_{(\mathcal S_t^2)^2} \int_{(\mathcal S_t^2)^2} \prod
_{i,j=1}^2 \Phi _{(\mathbf{k}^i,\mathbf{l}^j),h} \bigl(\max \bigl(
\sigma_2^i,\tau_2^j \bigr),\min
\bigl(\sigma_1^i,\tau_1^j \bigr)
\bigr)
\\[-2pt]
&&\quad\qquad\hspace*{120pt}{} \times\prod_{k=1}^2 \bigl(
\sigma_2^k-\sigma_1^k
\bigr)^{r-2}
\\
&&\hspace*{178pt}{} \times \bigl(\tau _2^k-
\tau_1^k \bigr)^{n-r-2} \,d\sigma_1^k
\,d\sigma_2^k \,d\tau_1^k \,d
\tau_2^k.
\end{eqnarray*}
Now, plugging the bound~(\ref{borne-phi-h-i}) (uniformly over
$(\mathbf{k}
^i,\mathbf{l}^j)$) into the latter expression yields, similar to
(\ref{eqbnd-R-n-r}): for any small $\varepsilon>0$,
\[
\sum_{\mathbf{k}^1,\mathbf{k}^2\in\{1,2\}^{n-r}} \bigl\| \bigl((f_{.} \mathbf{1}
_{[0,t]^n})\otimes_r (f_{.}
\mathbf{1}_{[0,t]^n}) \bigr)_{(\mathbf{k}^1,\mathbf{k}^2)}\bigr\| ^2_{L^2([0,t]^{2n-2r})}
\leq c_m |h|^{4+4\varepsilon} \mathcal J_\varepsilon
\]
with
\begin{eqnarray*}
\mathcal J_\varepsilon &:=& \int_{(\Delta_1^2)^2} \int
_{(\Delta
_1^2)^2} \prod_{i,j=1}^2
\bigl(\max \bigl(\sigma_2^i,\tau_2^j
\bigr)-\min \bigl(\sigma_1^i,\tau_1^j
\bigr) \bigr)^{-3/2-\varepsilon/2}
\\[-2pt]
&&\hspace*{74pt}{}\times \prod_{k=1}^2
\,d\sigma_1^k \,d\sigma_2^k\, d
\tau_1^k \,d\tau_2^k.
\end{eqnarray*}
By Lemma~\ref{lemtechni}, we know that this integral is finite for
$\varepsilon>0$ small enough, which achieves the proof of the
proposition in
the case ($2m\geq4$, $2 \leq r \leq2m-2$).

The two situations ($2m\geq4$, $r\in\{1,2m-1\}$) and ($2m=2$, $r=1$)
can also be handled with the same arguments as in the proof of
Proposition~\ref{propcontraction-f-h-plus-g-h} (with the help of Lemma
\ref{lemtechni} as well). Details are left to the reader.
\end{pf}

As in Section~\ref{seccontractions-1-d}, by combining Propositions
\ref{propvar-dim-2} and~\ref{propcontractions-2-d} we end up with the
following convergence in law result for the finite-dimensional
distributions of~$X^{2m,h}$:

%
\begin{proposition}\label{propresume}
Taking up the above notation, consider $t_1,\ldots,t_d\in[0,1]$ and
$m\ge
1$. Then as $h\to0$, we have
\[
\frac{1}{|h|} \bigl(X^{2m,h}_{t_1},\ldots,X^{2m,h}_{t_d}
\bigr) \xrightarrow{(d)} \sigma_{m} \mathcal N(0,\Gamma) \qquad
\mbox{where } \sigma_{m}^{2}= \frac{c L_{2m}^\varphi}{(2m-2)!}
\]
and $\mathcal N(0,\Gamma)$ is the centered Gaussian law in $\mathbb
R^d$ with
covariance matrix $\Gamma(i,j)=\min(t_i,t_j)$. Recall that the quantity
$L_{2m}^\varphi$ has been defined in Proposition~\ref{propvar-dim-2}.
\end{proposition}

Let us briefly check point \textup{(iii)} of Theorem~\ref{thmcvgce-L2-modulus-chaos-2d}, that is, the divergence of the series
of variances, as it is less obvious than in the 1-d case.

%
\begin{proposition}
With the notation of Proposition~\ref{propresume}, it holds that
$\sum_{m=1}^\infty\sigma_m^2 =\infty$.
\end{proposition}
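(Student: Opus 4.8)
The plan is to interchange the summation over $m$ with the double integral defining $L_{2m}^\varphi$ and to read off the divergence from an explicit region of the resulting integral. Since the integrand $\frac{\langle\xi,\eta\rangle^{2m}}{|\xi|^4|\eta|^4}\varphi(\langle\xi,e\rangle,\langle\eta,e\rangle)e^{-\frac12(|\xi|^2+|\eta|^2)}$ is nonnegative (recall $\varphi\ge 0$), the exchange is legitimate by Tonelli's theorem, and using the elementary identity $\sum_{m\ge 1}\frac{z^{2m}}{(2m-2)!}=z^2\cosh z$ together with the value of $L_{2m}^\varphi=L^\varphi_{2m,e}$ for an arbitrary unit vector $e$ (Lemma \ref{lem:invar-int}), I would obtain
\[
\sum_{m=1}^\infty \frac{L_{2m}^\varphi}{(2m-2)!}=\int_{\R^2}\!\int_{\R^2}\frac{\varphi(\langle\xi,e\rangle,\langle\eta,e\rangle)\,\langle\xi,\eta\rangle^2\,\cosh(\langle\xi,\eta\rangle)}{|\xi|^4|\eta|^4}\,e^{-\frac12(|\xi|^2+|\eta|^2)}\,d\xi\,d\eta .
\]
The key simplification is the exact identity $\cosh(\langle\xi,\eta\rangle)e^{-\frac12(|\xi|^2+|\eta|^2)}=\frac12 e^{-\frac12|\xi-\eta|^2}+\frac12 e^{-\frac12|\xi+\eta|^2}$; since $\varphi$ is even in each argument, the change of variable $\eta\mapsto-\eta$ shows that the two resulting terms contribute equally, and hence
\[
\sum_{m=1}^\infty \frac{L_{2m}^\varphi}{(2m-2)!}=\int_{\R^2}\!\int_{\R^2}\frac{\varphi(\langle\xi,e\rangle,\langle\eta,e\rangle)\,\langle\xi,\eta\rangle^2}{|\xi|^4|\eta|^4}\,e^{-\frac12|\xi-\eta|^2}\,d\xi\,d\eta .
\]
It thus suffices to show that this last integral is infinite.

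Next I would record two elementary properties of $\varphi(x,y)=\int_0^\infty u^{-3}(1-\cos ux)(1-\cos uy)\,du$: the scaling $\varphi(\la x,\la y)=\la^2\varphi(x,y)$ for $\la>0$ (change of variable $u\mapsto u/\la$), and the fact that $\varphi$ is continuous and strictly positive on the compact set $[\tfrac12,2]^2$ (continuity by dominated convergence, using the uniform bound $u^{-3}(1-\cos ux)(1-\cos uy)\le\min(4u^{-3},4u)$ on $[\tfrac12,2]^2$; positivity because, for $x,y\neq 0$, the integrand is positive for a.e.\ $u$). Writing $c_1:=\min_{[\frac12,2]^2}\varphi>0$, these two facts combine into the lower bound $\varphi(x,y)\ge c_1\,\rho^2$ whenever $x,y\in[\tfrac\rho2,2\rho]$.

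To conclude, I would restrict the last integral to the region
\[
\mathcal R=\Big\{(\xi,\eta)\in\R^2\times\R^2:\ |\xi|\ge 2,\ \langle\xi,e\rangle\ge\tfrac{\sqrt 3}{2}|\xi|,\ |\eta-\xi|\le\tfrac12\Big\}.
\]
On $\mathcal R$, writing $\rho=|\xi|$, one has $\langle\xi,\eta\rangle\ge\rho^2-\tfrac\rho2\ge\tfrac{\rho^2}{2}$, hence $\langle\xi,\eta\rangle^2\ge\tfrac{\rho^4}{4}$; moreover $|\xi|^4|\eta|^4\le\rho^4(\rho+\tfrac12)^4\le 16\rho^8$, both $\langle\xi,e\rangle$ and $\langle\eta,e\rangle$ lie in $[\tfrac\rho2,2\rho]$ (so $\varphi(\langle\xi,e\rangle,\langle\eta,e\rangle)\ge c_1\rho^2$), and $e^{-\frac12|\xi-\eta|^2}\ge e^{-1/8}$. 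Therefore the integrand is at least $c_2\,|\xi|^{-2}$ on $\mathcal R$ for some constant $c_2>0$, and integrating first over $\eta$ in the ball $\{|\eta-\xi|\le\tfrac12\}$ (of area $\pi/4$) and then over $\xi$ in polar coordinates over the cone $\{\langle\xi,e\rangle\ge\tfrac{\sqrt 3}{2}|\xi|\}$ (angular aperture $\pi/3$) leaves a positive multiple of $\int_2^\infty\frac{d\rho}{\rho}=+\infty$. This shows $\sum_m L_{2m}^\varphi/(2m-2)!=\infty$, and since $\si_m^2=c\,L_{2m}^\varphi/(2m-2)!$ with $c>0$, the series $\sum_{m\ge 1}\si_m^2$ diverges.

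The only non-formal point is the lower bound on $\varphi$, and the reason it matters is that $\cosh(\langle\xi,\eta\rangle)$ exactly cancels the Gaussian weight $e^{-\frac12(|\xi|^2+|\eta|^2)}$ along the diagonal $\xi=\eta$: the reduced integral does not decay there, so the divergence is genuine, but to capture it one must ensure that $\varphi(\langle\xi,e\rangle,\langle\eta,e\rangle)$ attains its maximal order of growth, comparable to $|\xi|^2$, on a set of directions of positive measure — whence the choice of a cone around $e$ rather than a full neighborhood of the diagonal. If preferred, one could instead work with the closed form $\partial_x\partial_y\varphi(x,y)=\tfrac12\log\big|\tfrac{x+y}{x-y}\big|$ (obtained by differentiating twice under the integral sign and invoking $\int_0^\infty\frac{\cos au-\cos bu}{u}\,du=\log(b/a)$), but this refinement is not needed.
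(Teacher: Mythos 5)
Your proof is correct and follows essentially the same route as the paper's: interchange the sum with the integral defining $L_{2m}^\varphi$, use the resulting $\cosh(\langle\xi,\eta\rangle)$ to cancel the Gaussian weight along the diagonal (reducing to $e^{-\frac12|\xi-\eta|^2}$), exploit the homogeneity $\varphi(\lambda x,\lambda y)=\lambda^2\varphi(x,y)$ together with strict positivity of $\varphi$ on a compact set, and restrict to a region with $|\xi|$ large and $\eta$ close to $\xi$ to extract the divergent $\int d\rho/\rho$. Your choice of a cone around $e$ rather than the paper's quadrant $[R,\infty)^2$, and your exact identity for $\cosh(z)e^{-\frac12(|\xi|^2+|\eta|^2)}$ in place of the paper's one-sided bound $e^{z}+e^{-z}\ge e^{z}$, are only cosmetic refinements.
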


\begin{pf} One has
%
\begin{eqnarray}\label{minor}
\qquad\sum_{m=1}^\infty\sigma_m^2
&=& c \sum_{m=1}^\infty\frac
{L_{2m}^\varphi
}{(2m-2)!}\nonumber
\\
&=& c \int_{\mathbb R^2} d\xi\int_{\mathbb R^2} d\eta
\frac{\langle\xi,\eta
\rangle^2}{|\xi|^4 |\eta|^4} \bigl\{e^{\langle\xi,\eta\rangle
}+e^{-\langle
\xi,\eta\rangle} \bigr\}\varphi(
\xi_1,\eta_1) e^{-(1/2) (|\xi
|^2+|\eta|^2)}
\nonumber\\[-8pt]\\[-8pt]
&\geq& c \int_{\mathbb R^2} d\xi\int_{\mathbb R^2} d\eta
\frac
{\langle\xi,\eta
\rangle^2}{|\xi|^4 |\eta|^4} \varphi(\xi_1,\eta_1)
e^{-(1/2)|\xi-\eta|^2}\nonumber
\\
&\geq& c \int_{[R,\infty)^2} d\xi\int_{B_\xi} d\eta
\frac{\langle\xi,\eta\rangle^2}{|\xi|^4 |\eta|^4} \varphi(\xi_1,\eta_1) \nonumber
\end{eqnarray}
for every $R>0$ and where the notation $B_\xi$ refers to the unit ball
around $\xi$. Now observe that for $R$ large enough, $\xi\in
[R,\infty
)^2$ and $\eta\in B_\xi$, one has
\[
\varphi(\xi_1,\eta_1)=\xi_1^2
\cdot\varphi \biggl(1,\frac{\eta
_1}{\xi_1} \biggr) \geq\xi_1^2
\cdot c_\varphi\qquad\mbox{with } c_\varphi=\inf
_{1/2 \leq x \leq2} \varphi(1,x) >0
\]
and also $\langle\xi,\eta\rangle^2 \geq\frac{1}2 |\xi|^4$. Therefore,
going back to~(\ref{minor}), one has for $R$ large enough and a
suitable (finite) $\widetilde{R}$,
\[
\sum_{m=1}^\infty\sigma_m^2
\geq c \int_{[R,\infty)^2} \frac{\xi
^2}{|\xi
|^4} \,d\xi \geq c \int
_{\widetilde{R}} \frac{dr}{r},
\]
which achieves the proof.
\end{pf}

\subsection{Tightness}
In order to complete the proof of Theorem~\ref{thmcvgce-L2-modulus-chaos-2d}, we are now left with the tightness
property for the family of processes $\{h^{-1} X^{2m,h}_t, t\in
[0,1]\}$. The following proposition is thus the equivalent of
Proposition~\ref{propotight} in our 2-d context.

%
\begin{proposition}\label{propotight-2}
Fix $m\geq1$. Then:
\begin{longlist}[(ii)]
\item[(i)]
There exist $\lambda>0$ and a constant $c_m$ such that for all $0\leq
s\leq t\leq1$,
%
\begin{equation}
\label{tight-bound-2} \sup_{ |h|\in(0,1)}\frac{1}{|h|^2} \mathbf E \bigl[
\bigl| X^{2m,h}_t-X^{2m,h}_s \bigr|^2
\bigr] \leq c_m \llvert t-s\rrvert ^\lambda.
\end{equation}

\item[(ii)] The family $\{X^{2m,h}; |h|>0\}$ is tight in $\mathcal
C([0,1])$.
\end{longlist}
\end{proposition}

\begin{pf}
We use the same arguments as in the proof of Proposition~\ref{propotight}. First, observe that
\[
\mathbf E \bigl[ \bigl| X^{2m,h}_t-X^{2m,h}_s
\bigr|^2 \bigr] \leq c_m \sum_{\mathbf{i}\in\{
1,2\}^{2m}}
\bigl\{ A^{\mathbf{i}}_{s,t}+B^\mathbf{i}_{s,t}+\|
g_{\mathbf{i},t}-g_{\mathbf{i},s}\| _{L^2([0,s]^{2m})}^2 \bigr\},
\]
where
\begin{eqnarray*}
A^\mathbf{i}_{s,t}&:=&\mathop{\int_{0<t_1 <\cdots<t_{2m}}}_{
s<t_{2m} <t}\Phi _\mathbf{i}(t_1,t_{2m})^2
\,dt_1 \cdots \,dt_{2m},
\\
B^\mathbf{i}_{s,t}&:=&\mathop{\int_{0<t_1 <\cdots<t_{2m}}}_{
s<t_{2m} <t}g_{\mathbf{i},t}(t_1,\ldots,t_{2m})^2
\,dt_1 \cdots \,dt_{2m}.
\end{eqnarray*}
By using both~(\ref{borne-phi-h-i-2}) and~(\ref{borne-phi-h-i-3}), it
is readily checked that
\[
\max_{\mathbf{i}\in\{1,2\}^{2m}}|h|^{-2} \|g_{\mathbf
{i},t}-g_{\mathbf{i},s}
\| _{L^2([0,s]^{2m})}^2 \leq c \llvert t-s \rrvert ^\lambda
\]
for some $\lambda>0$. Then the treatments of $\sum_{\mathbf{i}\in\{
1,2\}
^{2m}}A^\mathbf{i}_{s,t}$ and $\sum_{\mathbf{i}\in\{1,2\}
^{2m}}B^\mathbf{i}_{s,t}$, as well
as the derivation of assertion \textup{(ii)}, follow the lines of the
proof of Proposition~\ref{propotight}. For the sake of conciseness, we
do not repeat the details of the procedure.
\end{pf}

\begin{appendix}
\section*{Appendix: A technical lemma}
It only remains to prove the technical result on which the contraction
computations of Propositions~\ref{propcontraction-f-h-plus-g-h} and
\ref{propcontractions-2-d} rely.

%
\begin{lemma}\label{lemtechni}
The three following integrals
%
\begin{eqnarray}
\label{int-sing-1} \qquad &\displaystyle \int_{[0,1]^2} \int_{[0,1]^2}
\prod_{i,j=1}^2 \bigl(\max \bigl(\sigma
^i,\tau ^j \bigr)-\min \bigl(\sigma^i,
\tau^j \bigr) \bigr)^{-3\delta} \,d\sigma^1 \,d
\sigma^2 \,d\tau^1 \,d\tau^2,&
\\
\label{int-sing-2} &\displaystyle \int_{(\Delta_1^2)^2}\int_{[0,1]^2}
\prod_{i,j=1}^2 \bigl(\max \bigl(
\sigma^i,\tau _2^j \bigr)-\min \bigl(
\sigma^i,\tau_1^j \bigr)
\bigr)^{-5\delta} \,d\sigma^1 \,d\sigma ^2\prod
_{k=1}^2 \,d\tau_1^k\, d
\tau_2^k&
\end{eqnarray}
and
%
\begin{equation}
\label{int-sing-3} \qquad\int_{(\Delta_1^2)^2} \int_{(\Delta_1^2)^2}
\prod_{i,j=1}^2 \bigl(\max \bigl(
\sigma_2^i,\tau_2^j \bigr)-\min
\bigl(\sigma_1^i,\tau_1^j \bigr)
\bigr)^{-7\delta} \prod_{k=1}^2 \,d
\sigma_1^k \,d\sigma_2^k\, d
\tau_1^k \,d\tau_2^k
\end{equation}
are convergent if and only if $\delta<1/4$.
\end{lemma}

We only focus on~(\ref{int-sing-3}), since~(\ref{int-sing-1}) and
(\ref
{int-sing-2}) can be treated with similar arguments (see Remark~\ref{rkother-cases} at the end of the proof). In order to ease notation,
we shall also change our time indices and set $(\sigma_{1}^{1},\sigma
_2^1)=(x_1,x_5)$, $(\sigma_{1}^{2},\sigma_2^2)=(x_{2},x_6)$, $(\tau
_{1}^{1},\tau^1_2)=(x_{3},x_7)$, $(\tau_{1}^{2},\tau_2^2)=(x_{4},x_8)$.
Our integral of interest can thus be written as
\begin{eqnarray}
\label{integral} I_\alpha&:=& \int_D
\bigl[(x_7\vee x_5) -(x_3 \wedge
x_1) \bigr]^{-\alpha} \bigl[(x_8\vee
x_5) -(x_4 \wedge x_1) \bigr]^{-\alpha}
\nonumber\\[-8pt]\\[-8pt]
&&\hspace*{10pt}{}\times \bigl[(x_7\vee x_6) -(x_3 \wedge
x_2) \bigr]^{-\alpha} \bigl[(x_8\vee
x_6) -(x_4 \wedge x_2)
\bigr]^{-\alpha} \,dx,\nonumber
\end{eqnarray}
where $D=\{x\in[0,1]^8\dvtx  x_i < x_{4+i}, 1\le i\le4\}$ and $\alpha<7/4$.

The necessity of the condition $\alpha<7/4$ for the convergence of
(\ref{integral}) stems from the following fact: observe that if
\[
\mathcal S:= \bigl\{x\in[0,1]^4\dvtx 0<x_1<x_5<x_2<x_6<x_3<x_7<x_4<x_8<1
\bigr\}
\]
one has
\begin{eqnarray*}
I_{7/4} & \geq& \int
_{\mathcal S} (x_7-x_1)^{-7/4}(x_8-x_1)^{-7/4}(x_7-x_2)^{-7/4}(x_8-x_2)^{-7/4}
\,dx\,dy
\\
& \geq& c \int_{[0,1]^3} (u_1+u_2)^{-7/4}(u_1+u_2+u_3)^{-7/4}
\\
&&\hspace*{31pt}{}\times u_2^{-7/4}
(u_2+u_3)^{-7/4} u_1u_2^2
u_3 \,du_1\,du_2\,du_3
\\
& \geq& c \int_0^1 \frac{dr}{r}
\end{eqnarray*}
by using spherical coordinates.

In order to prove the convergence of $I_\alpha$ when $\alpha<7/4$, we
propose to rely on some block-type representation of the integral,
described as follows. First, given $x\in D$, denote $J_1:=[x_3 \wedge
x_1,x_7\vee x_5]$, $J_2:=[x_4 \wedge x_1,x_8\vee x_5]$, $J_3:=[x_3
\wedge x_2,x_7\vee x_6]$, $J_4:=[x_4 \wedge x_2,x_8\vee x_6]$, so that
\[
I_\alpha=\int_D \prod
_{i=1}^4 \ell(J_i)^{-\alpha}
\qquad\mbox {where } \ell \bigl([a,b] \bigr)=b-a.
\]
Now and for the rest of the proof, we fix a generic permutation $\sigma
\in
\mathfrak{S}_8$ and consider the simplex $\mathcal S^\sigma$
generated by $\sigma$,
that is, $\mathcal S^\sigma:= \{x\in[0,1]^8\dvtx  x_{\sigma(1)} < \cdots<
x_{\sigma(8)}\}$,
assuming that $\mathcal S^\sigma\subset D$. If $J_i=[x_{\sigma
(m_i)},x_{\sigma(n_i)}]$
on $\mathcal S^\sigma$ (for $m_i<n_i \in\{1,\ldots,8\}$ depending on
$\sigma$ as
well), we introduce the block $B_i^\sigma:=\{m_i,m_i+1,\ldots,n_i\}$ and
set $\mathcal B^\sigma:=\{B^\sigma_1,\ldots,B^\sigma_4\}$. Then,
using an elementary
change of variables, it is readily checked that
\[
I_{\alpha,\sigma}:=\int_{\mathcal S^\sigma} \prod
_{i=1}^4 \ell (J_i)^{-\alpha}=
\int_{\mathcal S^\sigma
} \prod_{i=1}^4
(x_{\sigma(n_i)}-x_{\sigma(m_i)})^{-\alpha
}=I_{\alpha,\mathcal B^\sigma},
\]
where we have used the following general notation:

%
\begin{notation}
Given $B_i:=\{m_i,m_i+1,\ldots,n_i\}$ ($i=1,\ldots,4$) with $m_i<n_i
\in\{1,\ldots,8\}$ and $\mathcal B:=\{B_1,\ldots,B_4\}$, we set
\[
I_{\alpha,\mathcal B}:=\int_{0<x_1 <\cdots<x_8 <1} \prod
_{i=1}^4 (x_{n_i}-x_{m_i})^{-\alpha}
\qquad\in[0,\infty].
\]
\end{notation}

Of course, $I_\alpha=\sum_{\sigma\dvtx  \mathcal S^\sigma\subset D}
I_{\alpha,\sigma}=\sum_{\sigma\dvtx  \mathcal S^\sigma\subset D} I_{\alpha,\mathcal B^\sigma}$. Our
key argument to prove
that $I_{\alpha,\mathcal B^\sigma} < \infty$ for every $\sigma\in
\mathfrak{S}_8$ and
$\alpha<\frac{7}4$ lies in the following three basic observations regarding
the four blocks $B^\sigma_i$ composing $\mathcal B^\sigma$:
\begin{longlist}[(iii)]
\item[(i)] $\operatorname{Card}(B^\sigma_i) \geq4$ ($J_i$ involves
the min/max
over four points);

\item[(ii)] $\operatorname{Card}(B^\sigma_i \cup B^\sigma_j ) \geq
6$ if
$i\neq j$
($J_i \cup J_j$ involves the min/max over at least six points);

\item[(iii)] Each of the extremum points $1$ and $8$ appears exactly
twice in $\mathcal B^\sigma$. Indeed, on $\mathcal S^\sigma$, the
minimum $x_{\sigma(1)}$
[resp., maximum $x_{\sigma(8)}$] appears exactly twice as a left (resp.,
right) bound in $J_1,\ldots,J_4$.

%
\begin{figure}[t]

\includegraphics{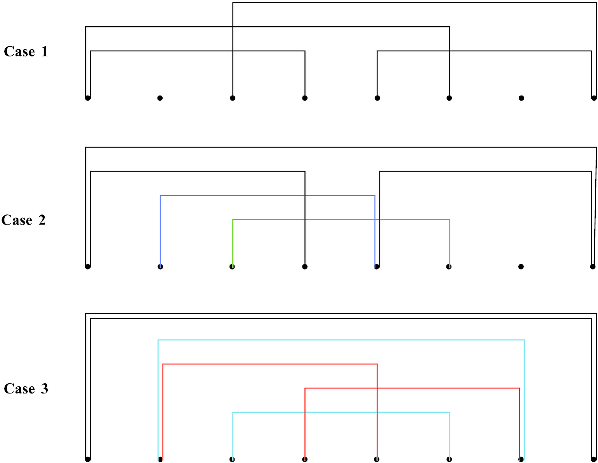}

%
%
%
%
%
%
%
%
%
%
%
\caption{Representation of the ``extremal'' situations in each case, that is,
the $\mathcal B_k$ ($k\in\{0,\ldots,4\}$). Each line connects the extremities
of a block in $\mathcal B_k$. In case~2 (resp., case~3), the black lines are
the ones common to $\mathcal B_1$ and $\mathcal B_2$ (resp., $\mathcal
B_3$ and $\mathcal B_3$).}\label{picconstr}
\end{figure}

Let us now discriminate the possible situations for $\mathcal B^\sigma
$ according
to this last condition \textup{(iii)} (see Figure~\ref{picconstr} for a representation
in each case):
\end{longlist}
\begin{longlist}
\item[\textit{Case} 1.] $1$ and $8$ never appear in the same block
$B_1^\sigma,\ldots,B_4^\sigma$. Then, by focusing on the possibilities for the two
blocks with left-hand side $1$ (resp., the two blocks with right-hand
side $8$), and given the above constraints \textup{(i)--(ii)}, we end up
with $I_{\alpha,\mathcal B^\sigma} \leq I_{\alpha,\mathcal B_0}$ where
\[
\mathcal B_0:= \bigl\{\{1,\ldots,4\},\{1,\ldots,6\},\{5,\ldots,8
\}, \{ 3,\ldots,8\} \bigr\}.
\]

\item[\textit{Case} 2.] $1$ and $8$ appear once and only once in a same block
(and so each of them appears once ``alone'' in another block). Then it
remains to pick one block over the points $\{2,\ldots,7\}$, and given
the constraints \textup{(i)--(ii)} on this block, we can easily conclude
that there exists $k\in\{1,2\}$ such that $I_{\alpha,\mathcal
B^\sigma}\leq I_{\alpha,\mathcal B_k}$ where
\begin{eqnarray*}
{\mathcal B_1}&:=& \bigl\{\{1,\ldots,8\},\{1,\ldots,4\},\{ 5,\ldots,8
\},\{2,\ldots,5\} \bigr\},
\\
{\mathcal B_2}&:=& \bigl\{\{1,\ldots,8\},\{1,\ldots,4\},\{ 5,\ldots,8
\},\{ 3,\ldots,6\} \bigr\}.
\end{eqnarray*}

\item[\textit{Case} 3.] $1$ and $8$ appear twice in a same block (necessarily
$\{1,\ldots,8\}$). Then we have to pick two blocks over the points $\{
2,\ldots,7\}$, and given the constraints \textup{(i)--(ii)} on these two
blocks [note, e.g., that, given \textup{(ii)}, $2$ and $7$ are
necessarily involved in the union of these blocks], we can easily
conclude that there exists $k\in\{1,2\}$ such that $I_{\alpha,\mathcal B^\sigma}
\leq I_{\alpha,\mathcal B_{2+k}}$, where
\begin{eqnarray*}
{\mathcal B_{3}}&:=& \bigl\{\{1,\ldots,8\},\{1,\ldots,8\},\{ 2,
\ldots,5 \},\{4,\ldots,7\} \bigr\},
\\
{\mathcal B_{4}}&:=& \bigl\{\{1,\ldots,8\},\{1,\ldots,8\},\{ 2,\ldots,7
\},\{3,\ldots,6\} \bigr\}.
\end{eqnarray*}
As a consequence of this reasoning, the problem is now reduced to the
sole consideration of the five ``extremal'' integrals $I_{\alpha,\mathcal B_k}$
($k\in\{0,\ldots,4\}$), which can be very easily done with basic
estimates. For instance, if $\alpha=\frac{7}4-\varepsilon$ with
$\varepsilon>0$, one has
\begin{eqnarray*}
I_{\alpha,\mathcal B_0} &=& \int_{0<x_1 < \cdots<x_8 <1} \,dx (x_4-x_1)^{-\alpha
}(x_6-x_1)^{-\alpha}(x_8-x_5)^{-\alpha}(x_8-x_3)^{-\alpha}
\\
&=&c \int_{[0,1]^5} \,du (u_1+u_2)^{-\alpha}(u_1+
\cdots +u_4)^{-\alpha}
\\
&&\hspace*{33pt}{}\times (u_4+u_5)^{-\alpha}(u_2+
\cdots+u_5)^{-\alpha}u_1u_5
\\
&\leq& c \int_{[0,1]^5} \,du u_1^{-1+\varepsilon}
u_5^{-1+\varepsilon} u_2^{-1+(2/3) \varepsilon}u_3^{-1+(2/3) \varepsilon}
u_4^{-1+(2/3)\varepsilon} < \infty,
\end{eqnarray*}
where we have used the elementary bounds
\begin{eqnarray*}
(u_1+u_2)^{-\alpha} &\leq& u_1^{-\alpha},
\qquad(u_4+u_5)^{-\alpha} \leq u_5^{-\alpha},
\\
(u_1+\cdots+u_4)^{-\alpha} &\leq&
u_1^{-\alpha
+3\kappa} u_2^{-\kappa}
u_3^{-\kappa} u_4^{-\kappa}
\end{eqnarray*}
%
with $\kappa:=\frac{1}2-\frac{\varepsilon}{3}$.
\end{longlist}

%
\begin{remark}\label{rkother-cases}
This reduction of the problem, based on a block representation of the
integral, can be easily adapted to prove the convergence of (\ref
{int-sing-1}) [resp.,~(\ref{int-sing-2})], by working with blocks $\{
1,\ldots,4\}$ (resp., $\{1,\ldots,6\}$) made of at least two (resp.,
three) elements. Thus, for relation~(\ref{int-sing-1}) [resp.,~(\ref{int-sing-2})], one can check that the situation reduces to the sole
consideration of two (resp., three) easy-to-handle integrals on specific
simplexes.
\end{remark}
\end{appendix}

\section*{Acknowledgements}
We are grateful to the Associate Editor and to
an anonymous referee for stimulating questions and comments, which
helped us to clarify several crucial points of the paper.


%

\printaddresses

\end{document}